\newtheorem{thm}{Theorem}[section]
\newtheorem{Lemma}[thm]{Lemma}
\newtheorem{cor}[thm]{Corollary}
\newtheorem{pro}[thm]{Proposition}
\newtheorem{Remark}[thm]{Remark}
\newtheorem{ex}[thm]{Example}
\long\def\@makecaption#1#2{%
 \vskip\abovecaptionskip
  \sbox\@tempboxa{{#1.}\quad #2}%
   \ifdim \wd\@tempboxa >\hsize
    { #1.}\quad #2\par
     \else
  \global \@minipagefalse
   \hb@xt@\hsize{\hfil\box\@tempboxa\hfil}%
   \fi
   \vskip\belowcaptionskip}
\title{{Minimum forcing numbers of perfect matchings of circular  and prismatic  graphs}\footnote{This
 work is supported by NSFC (grant no. 12271229).}}
\author{ Qiaoyun Shi, Heping Zhang\footnote{The
corresponding author.}\\
\small{School of Mathematics and Statistics, Lanzhou University,
Lanzhou, Gansu 730000, P. R. China}\\
\small{E-mail addresses: shiqy21@lzu.edu.cn, zhanghp@lzu.edu.cn}}
\date{}
\begin{document}
\maketitle
\begin{abstract}
Let $G$ be a graph  with a perfect matching. Denote by $f(G)$ the minimum size of a matching  in $G$ that is uniquely extendable to a perfect matching in $G$. Diwan (2019) used linear algebra to prove that for the $d$-hypercube $Q_d$ ($d\geq 2)$, $f(Q_d)=2^{d-2}$, thus settling a conjecture  of Pachter and Kim (1998).  Recently, Mohammadian generalized this method to prove a general result: for a bipartite graph $G$ on $n$ vertices, if  $G$ admits an involutory weighted adjacency matrix $A$ over a field $F$,
 then $f(G\Box K_2)=\frac{n}{2}$, where $\square$ denotes the Cartesian product of two graphs. In this paper we obtain  $f(G\Box C_{2k})=n$ when a bipartite graph $G$ on $n$ vertices admits an involutory weighted adjacency matrix $A$ over a field $F$ of  characteristic not 2, for all integers $k\ge2$. Moreover, we demonstrate that this method can also be applied to some nonbalanced bipartite graphs $G$  when graphs $G$  admit a weighted bi-adjacency matrix with orthogonal rows.
\vskip 0.1cm

\noindent {\textbf{Keywords}:} Bipartite graph; Perfect matching; Forcing number;
Involutory matrix; Orthogonal matrix.

\end{abstract}

 \section{Introduction}
All graphs in this paper are finite and simple. For a graph $G$, let $V(G)$ denote its vertex set and $E(G)$ its edge set. The order of  $G$ is the  number of its vertices. An edge subset $S\subseteq E(G)$ is called a matching if no two edges in $S$ share a common end-vertex.  A matching of $G$  is  perfect  if it covers every  vertex of $G$. For a perfect matching $M$ of  $G$, a subset $S$ of $M$ is called a {\it forcing set} if $S$ is contained in no other perfect matchings of $G$. The \textit{forcing number} of $M$ in $G$, denoted by $f(G,M)$, is the smallest size of a forcing set  for $M$. The \textit{minimum forcing number} of $G$, denoted by $f(G)$, is defined as the minimum value of \(f(G, M)\) over all perfect matchings $M$ of $G$. In general, computing the minimum forcing number of a bipartite graph is an NP-complete problem \cite{Adams,Afshani}. For two surveys and some recent work on matching forcing, see  \cite{che,Liu,zhang2025}.

The concept of the forcing number of a perfect matching of a graph originated  in theoretical chemistry, where it corresponds to the ``innate degree of freedom" of a  Kekul\'e structure of  polycyclic aromatic compounds,  introduced by  Klein and Randi\'c \cite{Klein1}.  The term ``forcing" was later adopted in graph theory by Harary et al. \cite{Harary}. 
 The forcing idea has since appeared in various combinatorial structures \cite{Donovan2003}, such as graph colorings,  Latin squares, dominating sets, and block designs.


Let $Q_d$ denote the $d$-dimensional hypercube graph, whose vertex set is $\{0,1\}^d$, and where  two vertices are adjacent if and only if their corresponding vectors differ in exactly one
coordinate. Diwan \cite{Diwan} determined  the minimum forcing number of $Q_n$ by a linear algebra technique, settling a conjecture originally posed by Pachter and Kim \cite{Pather}
and later revised  by Riddle \cite{Riddle}. Riddle had previously verified the statement for even $d$ by a combinatorial approach.

\begin{thm}[Diwan \cite{Diwan}]\label{diwan}  $f(Q_d)=2^{d-2}$ for any integer $d\geq 2$.
\end{thm}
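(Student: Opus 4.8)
The plan is to establish both inequalities $f(Q_d)\ge 2^{d-2}$ and $f(Q_d)\le 2^{d-2}$, with the lower bound carrying the real weight. Write $N=2^{d-1}$ for the common size of the two colour classes $X,Y$ of $Q_d$. The starting point is the standard fact that a subset $S$ of a perfect matching $M$ is a forcing set if and only if no $M$-alternating cycle avoids $S$. To turn this into linear algebra I would fix once and for all a \emph{weighted bi-adjacency matrix} $W$ (rows indexed by $X$, columns by $Y$) whose entries are nonzero \emph{exactly} on the edges of $Q_d$. For a given perfect matching $M$, reorder the columns so that the $M$-edges lie on the diagonal, obtaining $W_M$ with $\operatorname{rank}W_M=\operatorname{rank}W$. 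An $M$-alternating cycle avoiding $S$ corresponds precisely to a directed cycle in the off-diagonal support of $W_M$ restricted to the index set $T$ of the free pairs $\bar S=M\setminus S$; hence $S$ forcing means that support is acyclic, so $W_M[T,T]$ is simultaneously triangulable with diagonal entries equal to the (nonzero) weights of $M$-edges, whence $\det W_M[T,T]\ne 0$. Since a principal submatrix has rank at most that of the whole matrix, this gives $|\bar S|\le\operatorname{rank}W$, and therefore $f(Q_d,M)=|S|\ge N-\operatorname{rank}W$ for \emph{every} $M$. The lower bound thus reduces to producing a $W$ (nonzero on all edges) with $\operatorname{rank}W\le N/2=2^{d-2}$.

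To build such a $W$ I would split $Q_d=Q_{d-1}\,\Box\,K_2$ into two layers joined by rungs and use an \emph{involutory} weighted adjacency matrix $A=\bigl(\begin{smallmatrix}0&C\\ D&0\end{smallmatrix}\bigr)$ of $Q_{d-1}$, where $A^2=I$ forces $D=C^{-1}$ and both $C,D$ are supported exactly on the edges of $Q_{d-1}$. With the colour classes written as $X=P_0\cup Q_1$, $Y=Q_0\cup P_1$ (here $P,Q$ are the parts of $Q_{d-1}$), the bi-adjacency matrix of $Q_d$ takes the block form $W=\bigl(\begin{smallmatrix}C&I\\ I&C^{-1}\end{smallmatrix}\bigr)$, the identity blocks recording the rungs. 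Multiplying the top block row by $C^{-1}$ turns it into $(I,\ C^{-1})$, which equals the bottom block row, so the two block rows are dependent and $\operatorname{rank}W=\operatorname{rank}(C\mid I)=2^{d-2}$, exactly as needed. The remaining ingredient is the existence of the involutory $A$ for $Q_{d-1}$, which I would supply over $\mathbb{R}$ by the signing
\[
A=\frac{1}{\sqrt{d-1}}\sum_{i=1}^{d-1}\sigma_z^{\otimes(i-1)}\otimes\sigma_x\otimes I^{\otimes(d-1-i)},\qquad \sigma_x=\begin{pmatrix}0&1\\1&0\end{pmatrix},\ \sigma_z=\begin{pmatrix}1&0\\0&-1\end{pmatrix}.
\]
The summands $S_i$ each square to $I$ and pairwise anticommute because $\sigma_x\sigma_z=-\sigma_z\sigma_x$, so the cross terms cancel and $A^2=I$; moreover $A_{uv}\ne 0$ exactly when $u,v$ differ in one coordinate, so $A$ is a genuine weighted adjacency matrix of $Q_{d-1}$. (Alternatively, once the product theorem quoted in the abstract is available, this construction is precisely its hypothesis, and one may invoke it directly.)

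For the upper bound I would take $M$ to be the set of all rungs (direction-$d$ edges) and let $S$ consist of the rungs sitting over one colour class of $Q_{d-1}$, say over $Q$, so $|S|=2^{d-2}$. Because each colour class of $Q_{d-1}$ is independent, any $M$-alternating cycle that avoids $S$ would, after traversing a $P$-rung, be forced to leave a vertex $(p,1)$ along a layer edge to some $(q,1)$ and then continue on the $M$-edge at $(q,1)$, namely the $Q$-rung lying in $S$ — a contradiction; hence $S$ is forcing and $f(Q_d)\le 2^{d-2}$, matching the lower bound. The main obstacle I anticipate is the lower bound machinery: verifying carefully the dictionary between alternating cycles avoiding $S$ and cyclic support of $W_M[T,T]$, and—crucially—insisting that $W$ be nonzero on \emph{all} edges so that the triangulability argument yields a nonzero diagonal product \emph{simultaneously for every} perfect matching $M$. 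The technical heart is then the involutory signing and the rank computation $\operatorname{rank}W=2^{d-2}$; everything else is bookkeeping.
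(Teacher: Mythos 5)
Your proposal is correct and takes essentially the same route as the paper's framework for this theorem: the corank lower bound of Lemmas \ref{diwan-lem}--\ref{lem}, the prism decomposition $Q_d=Q_{d-1}\Box K_2$ with an involutory weighted adjacency matrix of $Q_{d-1}$ yielding the bi-adjacency matrix $\left(\begin{smallmatrix} C & I\\ I & C^{-1}\end{smallmatrix}\right)$ of rank $2^{d-2}$ (exactly the Diwan--Mohammadian mechanism behind Theorem \ref{Mohammadian}), and an explicit uniquely extendable matching of rungs for the upper bound. The only cosmetic difference is that you realize the involutory matrix by Huang's anticommuting $\pm1$ signing rather than Diwan's integer construction, both of which the paper itself cites.
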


Diwan's method is based on the following two simple lemmas. The latter relates to the corank of a matrix.

\begin{Lemma}[Diwan \cite{Diwan}]\label{diwan-lem}
Let $G$ be a graph of order $n$ with a perfect matching. If the largest order of an induced subgraph of $G$ having a unique perfect matching is $k$, then the forcing number of every perfect matching in $G$ is at least $(n-k)/2$.
\end{Lemma}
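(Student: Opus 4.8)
The plan is to take an arbitrary perfect matching $M$ of $G$ together with a minimum forcing set $S\subseteq M$, and to manufacture from $S$ an induced subgraph of $G$ having a \emph{unique} perfect matching whose order is exactly $n-2|S|$. Comparing that order against the extremal quantity $k$ will immediately deliver the inequality $|S|\ge (n-k)/2$, and since $|S|=f(G,M)$ with $M$ arbitrary, this is precisely the claimed lower bound on the forcing number of every perfect matching.

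First I would let $V(S)$ denote the set of $2|S|$ vertices saturated by the edges of $S$, and set $H:=G-V(S)$, the subgraph induced on the remaining $n-2|S|$ vertices. Because $S\subseteq M$ and $M$ saturates every vertex of $G$, the edge set $M\setminus S$ covers exactly the vertices of $H$, so $M\setminus S$ is a perfect matching of $H$; in particular $n-2|S|$ is even and $H$ does have a perfect matching to begin with.

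The key step is to show that $M\setminus S$ is the \emph{only} perfect matching of $H$. Suppose instead that $H$ has a perfect matching $N\ne M\setminus S$. Then $S\cup N$ is a matching of $G$: the edges of $S$ cover $V(S)$, the edges of $N$ cover $V(H)=V(G)\setminus V(S)$, these vertex sets are disjoint, and each of $S,N$ is itself a matching. Since it saturates every vertex, $S\cup N$ is a perfect matching of $G$. Moreover $N\ne M\setminus S$ forces $S\cup N\ne S\cup(M\setminus S)=M$, while plainly $S\subseteq S\cup N$. Thus $S\cup N$ is a perfect matching of $G$ other than $M$ that contains $S$, contradicting that $S$ is a forcing set of $M$. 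Hence $H$ has a unique perfect matching.

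Finally, $H$ is an induced subgraph of $G$ with a unique perfect matching, and $k$ is by hypothesis the largest order attainable by such a subgraph, so $|V(H)|=n-2|S|\le k$, giving $|S|\ge (n-k)/2$. As $|S|=f(G,M)$ and $M$ ranges over all perfect matchings, every perfect matching of $G$ has forcing number at least $(n-k)/2$. I do not expect a serious obstacle in this argument; the one point deserving care is the direction of the forcing implication, namely recovering a genuine perfect matching of all of $G$ from a hypothetical \emph{second} matching of $H$ and verifying that it both differs from $M$ and still contains $S$.
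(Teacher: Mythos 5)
Your proof is correct, and it is essentially the standard argument for this lemma (the paper itself states it without proof, citing Diwan, and Diwan's original argument is exactly this complementation): deleting the vertices saturated by a forcing set $S$ of $M$ leaves an induced subgraph in which $M\setminus S$ must be the unique perfect matching, since any other perfect matching $N$ of that subgraph would make $S\cup N$ a second perfect matching of $G$ containing $S$. The comparison $n-2|S|\le k$ then gives the bound, so there is nothing to add.
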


 Let $F$ be a field. In particular, $\mathbb R$ and $\mathbb C$ denote the real and complex fields, respectively. A {\it weighted adjacency matrix} $A$ of a graph $G$ is a  matrix over $F$, with   rows and columns  indexed by $V(G)$, such that  the $(u, v)$-entry  is non-zero if and only if $u$ and $v$ are adjacent in $G$.   If $G$ is a bipartite graph with bipartition $(X,Y)$, then a {\it weighted bi-adjacency matrix} $B$ of $G$ is a submatrix of  $A$ whose rows and columns are indexed by partite sets $X$ and $Y$, respectively. The following result was implied and used in \cite{Diwan, Mohammadian}.


\begin{Lemma}[Diwan \cite{Diwan}, Mohammadian \cite{Mohammadian}] \label{lem}
Let $G$ be a bipartite graph with a perfect matching. If $A$ (resp. $B$) denotes a weighted (resp. bi-)adjacency matrix of $G$ over a field $F$, then
\begin{equation}f(G)\ge \frac{1}{2}(|V(G)| -\operatorname{rank}(A))=\frac{1}{2}\operatorname{corank}(A), \mbox{ and } f(G)\ge \frac{1}{2}(|V(G)| -2\,\operatorname{rank}(B))=\operatorname{corank}(B).
\end{equation}
\end{Lemma}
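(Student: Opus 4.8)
The plan is to derive both inequalities from Diwan's Lemma~\ref{diwan-lem} by bounding, in terms of the rank of $A$ (resp.\ $B$), the quantity $k$ appearing there, namely the largest order of an induced subgraph of $G$ that possesses a unique perfect matching. The bridge between the combinatorics and the linear algebra is the following observation: whenever an induced subgraph $H=G[W]$ carries a unique perfect matching, the submatrix of $A$ (resp.\ $B$) supported on $W$ is non-singular. Granting this, the elementary fact that a submatrix has rank at most that of the ambient matrix immediately controls $|W|$, and feeding the resulting bound on $k$ into Lemma~\ref{diwan-lem} yields the result.

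First I would record the key non-singularity claim and prove it through the Leibniz expansion of the determinant. Let $H=G[W]$ be an induced subgraph with a unique perfect matching. Since $H$ is bipartite (being induced in the bipartite graph $G$) and has a perfect matching, it is balanced: writing $W_X=W\cap X$ and $W_Y=W\cap Y$ we have $|W_X|=|W_Y|=|W|/2$. Let $P$ be the submatrix of $B$ with rows indexed by $W_X$ and columns by $W_Y$; then $P$ is a weighted bi-adjacency matrix of $H$. In the expansion $\det P=\sum_\sigma \operatorname{sgn}(\sigma)\prod_{u}P_{u,\sigma(u)}$, a term is non-zero precisely when every factor $P_{u,\sigma(u)}$ is non-zero, i.e.\ precisely when $\{u\,\sigma(u)\}$ is a perfect matching of $H$; thus the non-zero terms are in bijection with the perfect matchings of $H$. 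Because $H$ has a \emph{unique} perfect matching, exactly one monomial survives, no cancellation is possible, and $\det P\neq 0$ over $F$. Hence $\operatorname{rank}(P)=|W|/2$.

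The same reasoning applies to $A$ after using its bipartite block form. Ordering $W$ as $W_X$ followed by $W_Y$, the principal submatrix $A[W]$, which is a weighted adjacency matrix of $H$, has the anti-block-diagonal shape $\bigl(\begin{smallmatrix}0&P\\ Q&0\end{smallmatrix}\bigr)$, where $P$ and $Q$ are square weighted bi-adjacency matrices of $H$ (the second with the roles of the two parts exchanged, hence the same support as $P^{\mathsf T}$). Both are non-singular by the argument above, so $\det A[W]=\pm\det(P)\det(Q)\neq 0$ and $\operatorname{rank}(A[W])=|W|$.

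I would then finish by combining these rank facts with Lemma~\ref{diwan-lem}. Non-singularity of $A[W]$ gives $|W|=\operatorname{rank}(A[W])\le\operatorname{rank}(A)$; taking $H$ to be a largest unique-perfect-matching induced subgraph, $k\le\operatorname{rank}(A)$, and Lemma~\ref{diwan-lem} yields $f(G)\ge\tfrac12(|V(G)|-\operatorname{rank}(A))=\tfrac12\operatorname{corank}(A)$. Likewise $|W|/2=\operatorname{rank}(P)\le\operatorname{rank}(B)$ gives $k\le 2\operatorname{rank}(B)$, whence $f(G)\ge\tfrac12(|V(G)|-2\operatorname{rank}(B))$, which equals $\operatorname{corank}(B)$ because $B$ is square of order $|V(G)|/2$. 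The step I expect to be most delicate is the non-singularity claim: one must be certain that uniqueness of the perfect matching genuinely forbids all cancellation in the determinant (here guaranteed, since a unique matching leaves a single non-zero monomial, so the conclusion holds over an arbitrary field), and that the relevant submatrices are truly square, which is exactly the balancedness forced by $H$ having a perfect matching.
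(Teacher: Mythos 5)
Your proposal is correct and follows exactly the route the paper intends: the paper itself gives no proof of Lemma~\ref{lem} (it only cites Diwan and Mohammadian), and the standard argument in those references is precisely yours --- an induced subgraph with a unique perfect matching yields a single surviving monomial in the Leibniz expansion, hence a non-singular (bi-)adjacency submatrix, so the parameter $k$ of Lemma~\ref{diwan-lem} is bounded by $\operatorname{rank}(A)$ (resp.\ $2\operatorname{rank}(B)$). Your handling of the delicate points (balancedness of the induced subgraph, no cancellation over an arbitrary field, and the block form $\bigl(\begin{smallmatrix}0&P\\ Q&0\end{smallmatrix}\bigr)$ with $Q$ supported like $P^{\top}$ rather than equal to it) is accurate, so nothing is missing.
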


%
%
%

Diwan \cite{Diwan} constructed a weighted  adjacency matrix $A$ of $Q_d$ such that its inverse  $A^{-1}$ is also a weighted  adjacency matrix of $Q_d$, and both matrices have integer  entries, which allowed him  to prove Theorem \ref{diwan}. Huang \cite{Huang} constructed another weighted adjacency matrix of $Q_d$ with entries in $\{0,1,-1\}$ whose rows are orthogonal to confirm the famous Sensitivity Conjecture in information theory.


The Cartesian product of two graphs $G$ and $H$, denoted by $G\Box H$, is the graph with vertex set $V(G)\times V(H)$, and two vertices $(g_1,h_1)$ and $(g_2,h_2)$ are adjacent if and only if either $g_1=g_2$ and $h_1h_2 \in E(H)$ or $h_1=h_2$ and $g_1g_2 \in E(G)$. Thus, the $d$-hypercube $Q_d$ can be expressed as the $d$-fold Cartesian product of $K_2$ (a complete graph with two vertices), and $Q_d=Q_{d-1}\Box K_2$ for $d\geq 2$.

Diwan \cite{Diwan} asked whether this algebraic technique could be applied to compute the
minimum forcing number of perfect matchings in other bipartite graphs.  Mohammadian \cite{Mohammadian} later generalized  Diwan's method  to the {\it prism} of a bipartite graph $G$, i.e., $G\Box K_2$.  A matrix $A$ is called {\it involutory} if $A^2 = I$ (equivalently, $A^{-1} = A$). Let  $\mathcal{I}_F (G)$ be the set of weighted adjacency matrices of $G$ over $F$ that are involutory.

\begin{thm}[Mohammadian \cite{Mohammadian}]\label{Mohammadian} Let $G$ be a bipartite graph with $n$ vertices. If $\mathcal{I}_F (G)\ne \emptyset$, then
\begin{equation}f(G\Box K_2)=\frac{n}{2}.\end{equation}
\end{thm}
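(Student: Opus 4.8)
The plan is to prove the two inequalities $f(G\Box K_2)\le \tfrac n2$ and $f(G\Box K_2)\ge \tfrac n2$ separately, using Lemma \ref{lem} for the lower bound and an explicit forcing set for the upper bound. First I would unpack the hypothesis. Writing the bipartition of $G$ as $(X,Y)$ and ordering $V(G)$ with $X$ first, every weighted adjacency matrix of $G$ has the block form $A=\left(\begin{smallmatrix}0&B\\ C&0\end{smallmatrix}\right)$, where $B$ and $C$ carry the supports of the $X$--$Y$ and $Y$--$X$ adjacencies. If $A\in\mathcal I_F(G)$ then $A^2=I$ forces $BC=CB=I$; hence $B$ is square and invertible, so $|X|=|Y|=\tfrac n2$ (in particular $G$ is balanced), and $C=B^{-1}$ is itself a weighted bi-adjacency matrix, i.e. the support of $B^{-1}$ is exactly the $Y$--$X$ adjacency pattern of $G$. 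This last fact is the structural payload of the involutory hypothesis and is what makes the construction below legitimate.

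For the lower bound I would build a weighted bi-adjacency matrix of $H:=G\Box K_2$ and feed it to Lemma \ref{lem}. The graph $H$ is balanced bipartite with parts $P=(X\times\{0\})\cup(Y\times\{1\})$ and $Q=(Y\times\{0\})\cup(X\times\{1\})$, each of size $n$. Ordering the rows of $P$ as $X\times\{0\}$ then $Y\times\{1\}$ and the columns of $Q$ as $Y\times\{0\}$ then $X\times\{1\}$, the four blocks record: layer-$0$ horizontal edges ($X$--$Y$, support $B$), vertical edges over $X$ (diagonal), vertical edges over $Y$ (diagonal), and layer-$1$ horizontal edges ($Y$--$X$, support $B^{-1}$). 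I would therefore set
\[
B'=\begin{pmatrix} B & I\\ I & B^{-1}\end{pmatrix},
\]
which by the previous paragraph has precisely the adjacency support of $H$. A one-line row reduction (subtract $B^{-1}$ times the first block row from the second) annihilates the bottom $\tfrac n2$ rows, while the top block $\begin{pmatrix}B& I\end{pmatrix}$ has full row rank $\tfrac n2$; hence $\mathrm{rank}(B')=\tfrac n2$, so that the bi-adjacency corank $\tfrac12(|V(H)|-2\,\mathrm{rank}(B'))=\tfrac n2$. Lemma \ref{lem} then gives $f(H)\ge \tfrac n2$.

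For the upper bound I would exhibit a perfect matching together with a forcing set of size $\tfrac n2$. Take $M$ to be the set of all vertical edges $(v,0)(v,1)$, $v\in V(G)$, and let $S=\{(v,0)(v,1):v\in X\}$, so $|S|=|X|=\tfrac n2$. Once the edges of $S$ are fixed, every vertex of $X\times\{0,1\}$ is saturated, and the vertices that remain, namely $Y\times\{0,1\}$, span in $H$ only the vertical edges over $Y$: indeed $Y$ is independent in $G$, so there are no horizontal edges among $Y$-vertices in either layer. Thus the induced subgraph on the unsaturated vertices is a disjoint union of single edges, whose unique perfect matching completes $S$ to $M$. Hence $S$ is a forcing set and $f(H)\le f(H,M)\le \tfrac n2$. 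Combining the two bounds yields $f(G\Box K_2)=\tfrac n2$.

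The main obstacle is the support verification in the lower bound: the lower-right block of $B'$ must have support exactly equal to the layer-$1$ horizontal adjacency, i.e. the transpose of the support of $B$, and there is no a priori reason a chosen weighting of $B$ should have its inverse supported on that pattern. This is exactly where the involutory hypothesis is indispensable — it is what guarantees $B^{-1}=C$ is a weighted bi-adjacency matrix — and once that is in hand the rank count and the forcing argument are routine.
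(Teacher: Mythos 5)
Your proof is correct and takes essentially the same route as the paper: a lower bound from Lemma \ref{lem} applied to a low-rank weighted bi-adjacency matrix of $G\Box K_2$ built from the blocks $B$, $B^{-1}$ and $I$ (your matrix $B'=\left(\begin{smallmatrix} B & I\\ I & B^{-1}\end{smallmatrix}\right)$ is, up to a permutation of block rows and columns, exactly the matrix $R$ used in the paper's proof of Theorem \ref{non-}, of which this theorem is the balanced special case), together with an upper bound from the vertical matching over $X$, which extends uniquely to a perfect matching. Your preliminary observation that $A^2=I$ forces the bipartition to be balanced with $C=B^{-1}$ supported on the $Y$--$X$ adjacency is likewise the remark the paper records immediately after the theorem statement.
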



Note that for a bipartite graph $G$, $\mathcal{I}_F (G)\ne \emptyset$ holds if and only if $G$ admits a weighted bi-adjacency matrix $B$ such that  the pattern of non-zero entries of $B^{-1}$ matches that of  its transpose  $B^{\top}$ and the bipartition of $G$ is balanced.


 In this paper, we mainly consider the \textit{circular graph} $G\Box C_{2k}$, where $C_{2k}$ denotes the cycle on $2k$ vertices. Under the same condition as in Theorem \ref{Mohammadian} and for all fields $F$ of characteristic not 2, we obtain that $f(G\Box C_{2k})=n$ for all integers $k\geq 2$ (see Theorem \ref{circular} for details). Our main approach is to construct some $2k\times 2k$ partitioned matrices $R$ of order $nk$ and with corank $n$ by using blocks $B$ and $B^{-1}$ of order $n/2$,  which serve as weighted bi-adjacency matrices of $G \square C_{2k}$   (see Section 2). In Section 3 we  show that this algebraic method can be applied to non-balanced bipartite graphs $G$, provided their weighted bi-adjacency matrices are row-orthogonal or unitary, such as the complete bipartite graph $K_{m,n}$. Finally we give some further examples of applications of  Theorem \ref{circular} in Section 4.


\section{Minimum forcing number of circular graphs}

We now describe and prove our main result for the circular graph $G\Box C_{2k}$ as follows.


%

\begin{thm}\label{circular}
Let $G$ be a bipartite graph on $n$ vertices and $F$ a field of characteristic not 2. If $\mathcal{I}_F (G)\ne \emptyset$ and $k\ge2$, then
\begin{equation}\label{main}f(G\Box C_{2k})=n.\end{equation}
\end{thm}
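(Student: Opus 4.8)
The plan is to prove the two inequalities $f(G\Box C_{2k})\ge n$ and $f(G\Box C_{2k})\le n$ separately: the first by the corank bound of Lemma \ref{lem}, the second by exhibiting an explicit perfect matching together with a forcing set of size $n$.

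For the lower bound, fix a bipartition $(X,Y)$ of $G$ (balanced, of size $n/2$ each, since $\mathcal I_F(G)\ne\emptyset$) and a matrix $A=\left(\begin{smallmatrix}0&B\\ B^{-1}&0\end{smallmatrix}\right)\in\mathcal I_F(G)$, so that $B$ is a weighted bi-adjacency matrix of $G$ and $B^{-1}$ realizes the transpose pattern. Label the layers of $C_{2k}$ by $0,1,\dots,2k-1$ and color $(g,i)$ by $\varepsilon(g)+i \pmod 2$ (with $\varepsilon=0$ on $X$, $\varepsilon=1$ on $Y$); each color class has $nk$ vertices. Ordering vertices by layer, I would build a weighted bi-adjacency matrix $R$ of $G\Box C_{2k}$ as a cyclic block-tridiagonal array of $2k\times 2k$ blocks of size $n/2$: the diagonal block at layer $i$ (the within-layer edges) is $B$ for $i$ even and $B^{-1}$ for $i$ odd, while the off-diagonal blocks (the cyclic edges) are scalar multiples of the identity; after normalizing we may take $R_{i,i+1}=I$ and $R_{i,i-1}=q_iI$ with $q_i$ depending only on the parity of $i$. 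Then $R\mathbf v=0$ is a cyclic second-order recurrence $q_i v_{i-1}+D_iv_i+v_{i+1}=0$ in the blocks $v_i\in F^{n/2}$; since the elementary transfer matrices $M_i$ are invertible, $\operatorname{corank}(R)=\dim\ker(T-I_n)$, where $T$ is the $n\times n$ transfer matrix around the whole cycle. In particular $\operatorname{corank}(R)\le n$, with equality iff $T=I_n$.

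The heart of the lower bound is to choose the weights so that $T=I_n$ for every $k\ge 2$. Grouping the layers into consecutive even–odd pairs gives $T=N^k$, where $N$ is the two-step transfer matrix; a direct computation shows $N$ satisfies the fixed scalar quadratic $N^2-(1-q_0-q_1)N+q_0q_1 I=0$, whose coefficients I can tune through the two free weights $q_0,q_1$. Choosing $q_0,q_1$ over a suitable extension of $F$ with $q_0q_1=1$ and $1-q_0-q_1=\zeta+\zeta^{-1}$ for a primitive $k$-th root of unity $\zeta$ makes the roots equal to $\zeta^{\pm1}$; these are distinct for $k\ge 3$ (and one takes $\pm1$ for $k=2$), so $N$ is diagonalizable with $N^k=I_n$, hence $T=I_n$ and $\operatorname{corank}(R)=n$. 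Lemma \ref{lem} then gives $f(G\Box C_{2k})\ge n$. Note that the naive unit weights fail, since there $N^3=I$ yields the right value only when $3\mid k$ — precisely the phenomenon that forces the weight-tuning. For the upper bound I would use the \emph{brick-wall} matching $M$: match every $X$-column $\{x\}\times C_{2k}$ by the cycle-edges at even positions $(x,2j)(x,2j+1)$ and every $Y$-column by the cycle-edges at odd positions $(y,2j+1)(y,2j+2)$. Since $X$ and $Y$ receive opposite offsets, no square $4$-cycle is $M$-alternating. Let $S$ be the $n$ edges of $M$ incident with layer $0$ (one per column); then $V(S)$ is all of layer $0$ together with the $X$-vertices of layer $1$ and the $Y$-vertices of layer $2k-1$, so $|S|=n$. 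Deleting $V(S)$ leaves a graph where $(y,1)$ has the single neighbour $(y,2)$ and $(x,2k-1)$ the single neighbour $(x,2k-2)$; forcing these pendant edges and iterating produces a staircase of forced edges sweeping through all layers (at each step every vertex of the current layer has its within-layer neighbours already removed), so $G\Box C_{2k}-V(S)$ has a \emph{unique} perfect matching. Hence $S$ forces $M$ and $f(G\Box C_{2k})\le n$.

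I expect the main obstacle to be the lower bound, specifically showing $\operatorname{corank}(R)$ is \emph{exactly} $n$: the dimension count gives only $\operatorname{corank}(R)\le n$ for free, and attaining equality uniformly in $k$ rests on the eigenvalue analysis of the two-step transfer matrix $N$ and the corresponding choice of $q_0,q_1$ (over an extension field when necessary). Checking that the constructed $R$ really has the adjacency pattern of $G\Box C_{2k}$, and that Lemma \ref{lem} remains applicable over the extended field, are the remaining technical points.
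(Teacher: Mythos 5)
Your proposal is correct in substance, and its lower bound takes a genuinely different route from the paper's. The upper bound is the same argument: your set $S$ of the $n$ brick-wall edges meeting layer $0$ coincides, up to rotating the layer labels, with the paper's matching (which matches $X_1,Y_1$ vertically into the two neighbouring layers), and your cascade of pendant-edge forcings is what the paper dismisses as ``obviously uniquely extendable.'' For the lower bound, the paper builds the same cyclic block-tridiagonal skeleton with diagonal blocks $B,B^{-1}$, but fixes concrete integer weights (blocks $2I$, $-I$, $2B$, $2B^{-1}$ placed according to whether $k=2$ or $k\equiv 0,1,2\pmod 3$) and then verifies, case by case, explicit linear combinations expressing the first and last block-rows through the middle ones, giving $\mathrm{rank}(R)\le kn-n$. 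You instead keep two free parity-dependent weights $q_0,q_1$, identify $\mathrm{corank}(R)=\dim\ker(T-I_n)$ for the monodromy $T=N^k$, verify that the two-step transfer matrix satisfies the scalar quadratic $N^2-(1-q_0-q_1)N+q_0q_1I=0$ (this checks out; the cross terms cancel precisely because $BB^{-1}=I$, i.e.\ because $A$ is involutory), and tune $q_0,q_1$ so the quadratic becomes $(x-\zeta)(x-\zeta^{-1})$ with $\zeta^k=1$, forcing $N^k=I_n$ and corank exactly $n$. Your approach is uniform in $k$ and actually explains the paper's case structure: with unit weights one gets $N^2+N+I=0$, hence $N^3=I$, so the all-ones matrix works exactly when $3\mid k$ --- which is precisely the paper's only all-ones case (its Case 2) --- while the paper repairs the other residues by local surgery with $2$'s and $-1$'s rather than by your global re-weighting. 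Passing to an extension field $F'$ is harmless, as you note, since $\mathcal I_F(G)\subseteq\mathcal I_{F'}(G)$ and Lemma \ref{lem} holds over any field.

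The one genuine gap is positive characteristic. If $\mathrm{char}\,F=p$ and $p\mid k$, then no extension of $F$ contains a primitive $k$-th root of unity, so your prescription is vacuous as stated. When $k/p^{v_p(k)}\ge 3$ this is cosmetic: take any $k$-th root of unity $\zeta$ with $\zeta^2\ne 1$. But when $k=p^a$ or $k=2p^a$ (e.g.\ $k=4$ over characteristic $2$, or $k=3$ over characteristic $3$), every $k$-th root of unity in $\overline F$ equals $\pm1$, the two roots cannot be made distinct, and the diagonalizability argument cannot run; you then need a different choice, e.g.\ $q_0q_1=1$ and $q_0+q_1=-1$, which gives $(N-I)^2=0$ and hence $N^k=I+k(N-I)=I$ because $p\mid k$. (Your $k=2$ choice is fine in every characteristic, since there $N^2=I$ follows from the quadratic identity itself, with no diagonalization.) To be fair, the paper has the same blind spot: in characteristic $2$ its blocks $2I$, $2B$, $2B^{-1}$ vanish, so its matrix $R$ fails to be a weighted bi-adjacency matrix of $G\Box C_{2k}$ in three of its four cases. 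Still, since the theorem is stated for an arbitrary field $F$, your write-up should either add the patch above or explicitly restrict the root-of-unity tuning to the characteristics where it applies.
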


\begin{proof} Let $(X, Y)$ be a bipartition of the bipartite graph $G$,  and let $(X_i, Y_i), i=1,2,\dots ,2k,$ be the  bipartition of the $i$-th  copy $G_i$ of $G$ in $G\Box C_{2k}$ corresponding to $(X, Y)$ such that  any pair of  corresponding vertices in $G_i$ and $G_{i+1}$ are adjacent, where the indices are taken modulo $2k$. Since $\mathcal{I}_F (G)\ne \emptyset$, $|X|=|Y|=n/2$.

First, we show that
\begin{equation}\label{ineq}f(G\Box C_{2k})\le n. \end{equation}
Take a matching $M$ in $G\Box C_{2k}$ that consists of the edges between $X_1$ and  $X_2$ and the edges between $Y_1$  and $Y_{2k}$. Let $G':=G\Box C_{2k}-(X_1\cup X_2\cup Y_1\cup Y_{2k})$. Then  each vertex in $Y_2$ has a unique neighbor in the  graph $G'$, so every perfect matching of $G'$ contains the matching between $Y_2$ and $Y_3$. Similarly, every perfect matching of $G'-(Y_2\cup Y_3)$ contains the matching between $X_3$ and $X_4$.  In this way  we can see that $M$ can be   uniquely extended to a perfect matching in $G\Box C_{2k}$. This implies that $f(G\Box C_{2k})\le |M|= n$.

Next, we show that $f(G\Box C_{2k})\ge n$. By  \cite{Hammack},   $G\Box C_{2k}$ is   bipartite. Since $\mathcal{I}_F (G)\ne \emptyset$,  take $A\in \mathcal{I}_F (G)$. Then $A$ has the following form:  $$A=\begin{pmatrix}
 O & B\\
 B^{-1} & O
\end{pmatrix},$$
which is  a matrix of even order $n$, where $B$ is a weighted bi-adjacency matrix of $G$ from $X$ to $Y$ and $B^{-1}$ is also a weighted bi-adjacency matrix of $G$ from $Y$ to $X$. We will construct a weighted bi-adjacency matrix $R$ of $G\Box C_{2k}$ that  is a $2k\times 2k$ partitioned matrix $(R_{ij})_{2k\times 2k}$, where each block $R_{ij}$ is a square matrix of order $n/2$. In particular, each block $I$ is an identity matrix of order $n/2$, and $2I$ is twice the identity matrix; the same notation applies to  $2B$ and $2B^{-1}$. For $i=1,2,\dots ,2k$, use $R^{(i)}$ to denote the $i$-th block row of $R$,  a $1\times 2k$ partitioned matrix.

{\bf Case 1:} $k =2 $.

Let $R$ be the following form of partitioned matrix:

\begin{equation}\label{R0}
   R:=\bordermatrix{
	& Y_1      &X_2     & Y_3      &X_4\cr
    X_1    & B      &I       &O      & 2I \cr
    Y_2    & I      & -B^{-1} &2I      & O\cr
    X_3    & O      & I       &-B      & I \cr
	Y_4    & I      &O        &I      & B^{-1}
    }.
\end{equation}
For the partitioned matrix $R$ in (\ref{R0}) we  can find the following two equalities of partitioned matrices:
$$R^{(1)}= B R^{(2)}+2I R^{(3)},\\
R^{(4)}= B^{-1} R^{(3)}+ R^{(2)},$$

\noindent which imply that the first $\frac{n}{2}$ row vectors and the last $\frac{n}{2}$ row vectors in matrix $R$ can be expressed as linear combinations of middle $n$ row vectors of $R$. Hence we have

$$ \operatorname{rank}(R)\le (4-2)\times \frac{n}{2}=n.$$

{\bf Case 2:} $k \equiv 0 \pmod{3} $.

Let $R$ be the following  partitioned matrix:
\begin{equation}\label{R1}
   R:=\bordermatrix{%
           & Y_1    & X_2	  & Y_3		 & X_4     &	\cdots	 & X_{2k-2}& Y_{2k-1}& X_{2k}\cr			
X_1   	   & B 		& I       & O   	 & O	   & \cdots      & O       & O       & I     \cr
Y_2   	   & I 		& B^{-1}  & I    	 & O	   & \cdots      & O       & O       & O     \cr
X_3   	   & O 		& I       & B   	 & I       & \cdots      & O       & O       & O     \cr
Y_4   	   & O 		& O       & I   	 & B^{-1}  & \cdots      & O       & O       & O     \cr
\vdots     &\vdots  & \vdots  &\vdots 	 & \vdots  & \ddots      &\vdots   &\vdots   & \vdots\cr
Y_{2k-1}   & O      & O       & O   	 & O       & \cdots      & B^{-1}  & I 		 & O    \cr
X_{2k-1}   & O      & O       & O   	 & O       & \cdots      & I       & B       & I     \cr
Y_{2k} 	   & I      & O       & O  	     & O       & \cdots      & O       & I       & B^{-1}
}.
\end{equation}

To describe $R$ in (\ref{R1}) more precisely, we define its $(i,j)$-blocks as follows:
\begin{itemize}
    \item The diagonal blocks satisfy $R_{ii} = B$ or $B^{-1}$ according to whether $i$ is odd or even.
    \item The off-diagonal blocks satisfy $R_{i-1,i} = R_{i+1,i} = I_{\frac{n}{2}}$ for $1 \le i \le 2k$, where indices are taken modulo $2k$.
    \item All other blocks are zero matrices of order $\frac{n}{2}$.
\end{itemize}

 It follows that each row (column) of $R$ has exactly three nonzero blocks.

We can obtain the following two equalities for partitioned matrices:
\begin{equation}\label{top1}
R^{(1)}= B R^{(2)} - B R^{(4)} + R^{(5)} + \sum_{i=1}^{\frac{k}{3}-1 } (-R^{(6i+1)} + B R^{(6i+2)} - B R^{(6i+4)} + R^{(6i+5)}),
\end{equation}
\begin{align}\label{bottom1}
R^{(2k)}= &B^{-1} R^{(2k-1)} - B^{-1} R^{(2k-3)} + R^{(2k-4)} + \sum_{i=1}^{\frac{k}{3}-1 } (-R^{(2k-6i)} +B^{-1} R^{(2k-6i-1)}\notag\\
&-B^{-1} R^{(2k-6i-3)}  + R^{(2k-6i-4)}).
\end{align}

Eqs. (\ref{top1}) and (\ref{bottom1}) imply that the first $\frac{n}{2}$ row vectors and the last $\frac{n}{2}$ row vectors in matrix $R$ can be expressed as linear combinations of middle $(kn-n)$ row vectors of $R$. Hence, we conclude
$$\operatorname{rank}(R)\le kn-n.$$

To confirm Eq. (\ref{top1}), let
$$Z=(Z_1,\dots ,Z_{2k})=\sum_{i=0}^{\frac{k}{3}-1 } (-R^{(6i+1)} + B R^{(6i+2)} - B R^{(6i+4)} + R^{(6i+5)}),$$
where $$Z_j=\sum_{i=0}^{\frac{k}{3}-1 } (-R_{6i+1,j} + B R_{6i+2,j} - B R_{6i+4,j} + R_{6i+5,j}), 1\le j\le 2k.$$

\noindent Next we  can show that each  block $Z_j$  is a zero matrix by some routine computations, which yields Eq. (\ref{top1}). By an analogous argument or the symmetry, we can confirm Eq. (\ref{bottom1}).

If $j=6t+1$, then

$$Z_j=\begin{cases}
OR_{2k,1}-R_{1,1} + BR_{2,1}= -B+BI=O,~~~~\mbox{for } t=0,\\
OR_{j-1,j}-R_{j,j} + B R_{j+1,j}= -B+BI=O,~~~~\mbox{for } 1\le t\le \frac{k}{3} -1.
\end{cases}$$

If $j=6t+2$ and $0\le t\le \frac{k}{3} -1$, then
$$Z_j= - R_{j-1,j}+ B R_{j,j} + O R_{j+1,j}= -I+BB^{-1}=O.$$

If $j=6t+3$ and $0\le t\le \frac{k}{3} -1$, then
$$Z_j= B R_{j-1,j}+ O R_{j,j} -B R_{j+1,j}= BI-BI=O.$$

If $j=6t+4$ and $0\le t\le \frac{k}{3} -1$, then
$$Z_j= O R_{j-1,j}- B R_{j,j} + R_{j+1,j}= -BB^{-1}+I=O.$$

If $j=6t+5$ and $0\le t\le \frac{k}{3} -1$, then
$$Z_j= -B R_{j-1,j}+ R_{j,j} +O R_{j+1,j}= -BI+ B=O.$$

If $j=6t+6$, then
$$Z_j=\begin{cases}
R_{j-1,j} + O R_{j,j} -R_{j+1,j}= I-I=O,~~~~\mbox{for }0\le t\le \frac{k}{3}-2,\\
R_{2k-1,2k}+ OR_{2k,2k}-R_{1,2k}= I-I=O,~~~~\mbox{for }t= \frac{k}{3} -1.
\end{cases}$$


{\bf Case 3:} $k \equiv 1 \pmod{3}$ and $k\geq 4 $.

Let $R$ be the following partitioned matrix:
\begin{equation}\label{R2}
R:=   \bordermatrix{%
           & Y_1    & X_2	  & Y_3	  & X_4		 & Y_5	        &\cdots     & X_{2k-4}  & Y_{2k-3}  & X_{2k-2}   & Y_{2k-1}   & X_{2k}\cr			
X_1   	   & B 		& 2I      & O     & O 		 & O              &\cdots             & O         & O         & O          & O          & -I    \cr
Y_2   	   & I 		& B^{-1}  & I     & O        & O               &\cdots            & O         & O         & O          & O          & O     \cr
X_3   	   & O 		& I       & B     & I        & O               &\cdots             & O         & O         & O          & O          & O     \cr
Y_4   	   & O 		& O       & 2I    & B^{-1}   & I               &\cdots             & O         & O         & O          & O          & O     \cr
X_5   	   & O 		& O       & O     & I        & B               &\cdots            & O         & O         & O          & O          & O     \cr
\vdots     &\vdots  & \vdots  &\vdots & \vdots   &\vdots      &\ddots         & \vdots    &\vdots     & \vdots     &\vdots      & \vdots\cr
Y_{2k-4}   & O 		& O       & O     & O        & O              &\cdots             & B^{-1}    & I         & O          & O          & O     \cr
X_{2k-3}   & O 		& O       & O     & O        & O              &\cdots             & I         & B         & 2I         & O          & O     \cr
Y_{2k-2}   & O 		& O       & O     & O        & O               &\cdots             & O         & I         & B^{-1}     & I          & O     \cr
X_{2k-1}   & O 		& O       & O     & O        & O               &\cdots             & O         & O         & I          & B          & I     \cr
Y_{2k} 	   & -I		& O       & O     & O        & O              &\cdots             & O         & O         & O          & 2I         & B^{-1}
},\end{equation}
where  $R_{ii}=B$ or $B^{-1}$ according to whether $i$ is odd or even, and $R_{i-1,i}=R_{i+1,i}=I_{\frac{n}{2}}$, where $1\le i \le 2k$ and the subscripts are taken modulo $2k$,  but except for $R_{1,2k}=R_{2k,1}=-I_{\frac{n}{2}}$ and $R_{1,2}=R_{4,3}=R_{2k,2k-1}=R_{2k-3,2k-2}=2I_{\frac{n}{2}}$, and the other blocks are zeroes. Similar to  (\ref{R1}) each row (column) of the partitioned matrix $R$ in (\ref{R2}) has exactly three nonzero blocks.

We can find the following equalities for partitioned matrices:
\begin{equation}\label{top2}
 R^{(1)}= B R^{(2)} + R^{(3)} - B R^{(4)} + \sum_{i=1}^{\frac{k-4}{3} } (B R^{(6i)}-R^{(6i+1)} + R^{(6i+3)} - B R^{(6i+4)}) + B R^{(2k-2)} - R^{(2k-1)},
\end{equation}
\begin{align}\label{bottom2}
 R^{(2k)}=&B^{-1} R^{(2k-1)} + R^{(2k-2)} - B^{-1} R^{(2k-3)} + \sum_{i=1}^{\frac{k-4}{3}} ( B^{-1} R^{(2k-6i+1)}-R^{(2k-6i)} + R^{(2k-6i-2)}\notag\\
 &-B^{-1} R^{(2k-6i-3)} ) + B^{-1} R^{(3)}- R^{(2)}.
\end{align}
Eqs. (\ref{top2}) and (\ref{bottom2}) imply that the first $\frac{n}{2}$ row vectors and the last $\frac{n}{2}$ row vectors in matrix $R$ can be expressed as linear combinations of middle $kn-n$ row vectors of $R$. Hence, we have
$$\operatorname{rank}(R)\le kn-n.$$

To confirm Eq. (\ref{top2}), let
\begin{align}Z=&(Z_1,\dots ,Z_{2k})= -R^{(1)} + B R^{(2)} + R^{(3)} - B R^{(4)}\notag\\& + \sum_{i=1}^{\frac{k-4}{3} } (B R^{(6i)} - R^{(6i+1)} + R^{(6i+3)} -B R^{(6i+4)})+ B  R^{(2k-2)} - R^{(2k-1)}.
\end{align}
For each $1\le j\le 2k$, we compute the $j$-th block $Z_j$:
$$Z_j=-R_{1,j} + B R_{2,j} + R_{3,j} - B R_{4,j} + \sum_{i=1}^{\frac{k-4}{3} } (B R_{6i,j} - R_{6i+1,j} + R_{6i+3,j} -B R_{6i+4,j})+ B  R_{2k-2,j} - R_{2k-1,j}.$$



If $j=6t+1$, then
$$Z_j=\begin{cases}
-R_{1,1} + B R_{2,1}= -B+BI=O, ~~\mbox{for }t=0, \\
B R_{j-1,j} - R_{j,j} + O R_{j+1,j}= BI - B=O, ~~\mbox{for } k> 4,1\le t\le \frac{k-4}{3},\\
B R_{2k-2,2k-1} - R_{2k-1,2k-1}=BI-B=O, ~~\mbox{for } j=2k-1=6\frac{k-1}{3}+1.
\end{cases}$$

If $j=6t+2$, then
$$Z_j=\begin{cases}
-R_{1,2} + B R_{2,2}+ R_{3,2}= -2I+BB^{-1}+I=O, ~~\mbox{for }t=0,\\
-R_{j-1,j} + O R_{j,j}+ R_{j+1,j}= -I+I=O,~~\mbox{for } k> 4,1\le t\le \frac{k-4}{3},\\
-R_{2k-1,2k}+ OR_{2k,2k}- R_{1,2k}=-I-(-I)=O, ~~\mbox{for } j=2k=6\frac{k-1}{3}+2.
\end{cases}$$

If $j=6t+3$, then
$$Z_j=\begin{cases}
B R_{2,3} + R_{3,3}-B R_{4,3}= BI+B -B(2I)=O, ~~\mbox{for }t=0,\\
O R_{j-1,j}+ R_{j,j} - B R_{j+1,j}=B-BI=O,~~\mbox{for } k> 4,1\le t\le \frac{k-4}{3}.
\end{cases}$$

If $j=6t+4$, then
$$Z_j= R_{j-1,j}- B R_{j,j} + O R_{j+1,j}=I -BB^{-1}=O, ~~\mbox{for }0\le t\le \frac{k-4}{3}.$$

If $j=6t+5$, then
$$Z_j= -B R_{j-1,j}+ O R_{j,j} +B R_{j+1,j}= -BI+ BI=O,~~\mbox{for } 0\le t\le \frac{k-4}{3}.$$

If $j=6t+6$, then
$$Z_j= O R_{j-1,j}+ B R_{j,j} - R_{j+1,j}=BB^{-1}-I=O,~~\mbox{for }0\le t\le \frac{k-4}{3}.$$

In short, $Z$ is a zero matrix, which yields Eq. (\ref{top2}). Similarly, we can confirm Eq. (\ref{bottom2}).

{\bf Case 4:} $k \equiv 2 \pmod{3}$ and $k\geq 5 $.

Let $R$ be the following  partitioned matrix:
\begin{equation}\label{R3}
R:=   \bordermatrix{%
           & Y_1    & X_2	  & Y_3	  & X_4		 & Y_5	   &\cdots	 & X_{2k-4} & Y_{2k-3} & X_{2k-2} & Y_{2k-1} & X_{2k} \cr			
X_1   	   & B 		& I       & O     & O 		 & O       &\cdots   & O        & O        & O        & O        & -I     \cr
Y_2   	   & I 		& B^{-1}  & I     & O        & O       &\cdots   & O        & O        & O        & O        & O      \cr
X_3   	   & O 		& I       & 2B    & I        & O       &\cdots   & O        & O        & O        & O        & O      \cr
Y_4   	   & O 		& O       & I     & B^{-1}   & I       &\cdots   & O        & O        & O        & O        & O      \cr
X_5   	   & O 		& O       & O     & I        & B       &\cdots   & O        & O        & O        & O        & O      \cr
\vdots     &\vdots  & \vdots  &\vdots & \vdots   &\vdots   &\ddots   &\vdots    &\vdots    & \vdots   &\vdots    & \vdots \cr
Y_{2k-4}   & O      & O       & O     & O        & O       &\cdots   & B^{-1}   & I        & O        & O        & O      \cr
X_{2k-3}   & O      & O       & O     & O        & O       &\cdots   & I        & B        & I        & O        & O      \cr
Y_{2k-2}   & O      & O       & O     & O        & O       &\cdots   & O        & I        & 2B^{-1}  & I        & O      \cr
X_{2k-1}   & O      & O       & O     & O        & O       &\cdots   & O        & O        & I        & B        & I      \cr
Y_{2k} 	   & -I     & O       & O     & O        & O       &\cdots   & O        & O        & O        & I        & B^{-1}
},\end{equation}
where  $R_{ii}=B$ or $B^{-1}$ according to whether $i$ is odd or even, but except for $R_{3,3}=2B$ and $R_{2k-2,2k-2}=2B^{-1}$, and $R_{i-1,i}=R_{i+1,i}=I_{\frac{n}{2}}$, where $1\le i \le 2k$ and the subscripts are taken modulo $2k$, but except for $R_{1,2k}=R_{2k,1}=-I_{\frac{n}{2}}$, and the other blocks are zeroes. Also each row (column) of the partitioned matrix $R$ in (\ref{R3}) has exactly three nonzero blocks.

We can obtain the following two equalities for partitioned matrices:
\begin{equation}\label{top3}
 R^{(1)}=\sum_{i=0}^{\frac{k-5}{3} } (B R^{(6i+2)} - B R^{(6i+4)}+R^{(6i+5)} - R^{(6i+7)}) + B  R^{(2k-2)} - R^{(2k-1)},
\end{equation}
\begin{equation}\label{bottom3}
R^{(2k)}=\sum_{i=0}^{\frac{k-5}{3}} ( B^{-1} R^{(2k-6i-1)}- B^{-1} R^{(2k-6i-3)}+ R^{(2k-6i-4)} - R^{(2k-6i-6)} ) + B^{-1} R^{(3)}- R^{(2)}.
\end{equation}

\noindent Eqs. (\ref{top3}) and (\ref{bottom3}) imply that the first $\frac{n}{2}$ row vectors and the last $\frac{n}{2}$ row vectors in matrix $R$ in (\ref{R3}) can be expressed as linear combinations of middle $kn-n$ vectors of $R$. Hence, we have
$$\operatorname{rank}(R)\le kn-n.$$

To confirm Eq. (\ref{top3}), let
$$Z=(Z_1,\dots ,Z_{2k})=-R^{(1)}+\sum_{i=0}^{\frac{k-5}{3}} (B R^{(6i+2)} - B R^{(6i+4)} + R^{(6i+5)}-R^{(6i+7)})+B R^{(2k-2)} -R^{(2k-1)}.$$
For each $1\le j\le 2k$, we compute the $j$-th block $Z_j$  as follows:
$$Z_j=\sum_{i=0}^{\frac{k-5}{3}} (-R_{6i+1,j}+ B R_{6i+2,j} - B R_{6i+4,j} + R_{6i+5,j})-R_{2k-3,j}+ B R_{2k-2,j}-R_{2k-1,j}.$$



%

If $j=6t+1$, then
$$Z_j=\begin{cases}
O R_{2k,1} -R_{1,1} + B R_{2,1} = -B+BI=O, ~~~~\mbox{for }t=0,\\
O R_{j-1,j}-R_{j,j} + B R_{j+1,j}= -B+BI=O,~~~~\mbox{for }1\le t\le \frac{k-5}{3},\\
O R_{2k-4,2k-3}- R_{2k-3,2k-3} +B R_{2k-2,2k-3}= -B+BI=O,~~~~\mbox{for }j=2k-3=6\frac{k-2}{3}+1,
\end{cases}$$

If $j=6t+2$, then
$$Z_j=\begin{cases} - R_{j-1,j}+ B R_{j,j} + O R_{j+1,j}= -I+BB^{-1}=O, ~~\mbox{for } 0\le t\le \frac{k-5}{3},\\
-R_{2k-3,2k-2}+ B R_{2k-2,2k-2} -R_{2k-1,2k-2}= -I+ B (2B^{-1})-I=O, ~~\mbox{for } j=2k-2=6\frac{k-2}{3}+2.
\end{cases}$$

If $j=6t+3$,  then
$$Z_j=\begin{cases} B R_{j-1,j}+ O R_{j,j} -B R_{j+1,j}= BI-BI=O,~~\mbox{for } 0\le t\le \frac{k-5}{3},\\
B R_{2k-2,2k-1}- R_{2k-1,2k-1}+O R_{2k,2k-1} = BI-B=O, ~~\mbox{for }j=2k-1=6\frac{k-2}{3}+3.
\end{cases}
$$

If $j=6t+4$,  then
$$Z_j= \begin{cases}O R_{j-1,j}- B R_{j,j} + R_{j+1,j}= -BB^{-1}+I=O,~~\mbox{for } 0\le t\le \frac{k-5}{3},\\
O R_{2k,2k}-R_{2k-1,2k}- R_{1,2k} =-I -(-I)=O, ~~\mbox{for }j=2k=6\frac{k-2}{3}+4.
\end{cases}
$$

If $j=6t+5$, then
$$Z_j= -B R_{j-1,j}+ R_{j,j} +O R_{j+1,j}= -BI+ B=O, ~~\mbox{for }0\le t\le \frac{k-5}{3}.$$

If $j=6t+6$, then
$$Z_j= R_{j-1,j}+ O R_{j,j} - R_{j+1,j}= I- I=O,~~\mbox{for }0\le t\le \frac{k-5}{3}.$$

In short, $Z$ is a  zero matrix, which  implies Eq. (\ref{top3}). Similarly, we can confirm Eq. (\ref{bottom3}).

Summarizing all the above, we have for all $k\ge 2$,
\begin{equation}\label{ineq2}\operatorname{rank}(R)\le kn-n, \text{and } \operatorname{corank}(R)\geq n. \end{equation}
Combining Ineq. (\ref{ineq2})   with Lemma \ref{lem} we have
$$f(G\Box C_{2k})\ge n,$$
which together with Ineq. (\ref{ineq}) imply   Eq. (\ref{main}). Hence the theorem holds.  
\end {proof}

Corollary 2.7 in \cite{Mohammadian} together with its proof and Theorem \ref{circular} imply the following result.

\begin{cor} Let $F$ be a field with $|F|\geq 4$ and $\operatorname{char}(F)\not=2$.  If $G$ is a bipartite graph with $\mathcal I_F(G)\not=\emptyset$, then $\mathcal I_F(G\Box Q_d)\not=\emptyset$ for any positive integer $d$; further, $f(G\Box Q_d) = 2^{d-2}|V(G)|$ and $f(G\Box Q_d\Box C_{2k}) = 2^{d}|V(G)|$ for any integer $k\geq 2$.
\end{cor}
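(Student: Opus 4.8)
The plan is to isolate a single closure property of the class $\mathcal I_F$ and then iterate it, so that both forcing-number equalities fall out of Theorems~\ref{Mohammadian} and~\ref{circular}. The engine is the statement underlying Corollary~2.7 of \cite{Mohammadian}: if $H$ is bipartite, $\mathcal I_F(H)\neq\emptyset$, and $|F|\geq 4$, then $\mathcal I_F(H\Box K_2)\neq\emptyset$. First I would prove this. Pick $A\in\mathcal I_F(H)$, so $A$ is a weighted adjacency matrix of $H$ with $A^2=I$, and choose a scalar $a\in F\setminus\{0,1,-1\}$; such an $a$ exists exactly because $|F|\geq 4$. As $a^2\neq 1$, select nonzero $b,c\in F$ with $bc=1-a^2$ and set
\[
M=\begin{pmatrix} aA & bI \\ cI & -aA \end{pmatrix}.
\]
A block computation using $A^2=I$ gives diagonal blocks $a^2A^2+bcI=(a^2+bc)I=I$ and off-diagonal blocks $abA-abA=O$, so $M^2=I$. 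Since $a,b,c$ are all nonzero, the support of $M$ is precisely the edge set of $H\Box K_2$ (the blocks $\pm aA$ realise the two copies of $H$, while $bI$ and $cI$ realise the matching joining them), whence $M\in\mathcal I_F(H\Box K_2)$.

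With this lemma I would obtain $\mathcal I_F(G\Box Q_d)\neq\emptyset$ by induction on $d\geq 1$, using $Q_d=Q_{d-1}\Box K_2$ and associativity to write $G\Box Q_d=(G\Box Q_{d-1})\Box K_2$. The base case $d=1$ is the lemma applied to $H=G$; the inductive step applies it to $H=G\Box Q_{d-1}$, which is again bipartite because Cartesian products of bipartite graphs are bipartite, so the hypotheses are preserved. The two forcing identities are then immediate. Taking $H=G\Box Q_{d-1}$, for which $|V(H)|=2^{d-1}|V(G)|$ and $\mathcal I_F(H)\neq\emptyset$, Theorem~\ref{Mohammadian} yields $f(G\Box Q_d)=f(H\Box K_2)=|V(H)|/2=2^{d-2}|V(G)|$. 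Taking instead $H=G\Box Q_d$, for which $|V(H)|=2^{d}|V(G)|$ and $\mathcal I_F(H)\neq\emptyset$, Theorem~\ref{circular} yields $f(G\Box Q_d\Box C_{2k})=f(H\Box C_{2k})=|V(H)|=2^{d}|V(G)|$.

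The only delicate point, and the reason for the hypothesis $|F|\geq 4$, is the closure lemma: the construction forces $a\notin\{0,\pm 1\}$, since we need $a\neq 0$ for the two copies of $H$ to retain their edges and $a^2\neq 1$ so that $bc\neq 0$ and the connecting matching retains nonzero weights, and three excluded values can be avoided only in a field with at least four elements. Everything else is bookkeeping: confirming that $M$ has exactly the support required of a weighted adjacency matrix of the prism, and checking associativity and bipartiteness so that the induction and the two cited theorems apply verbatim.
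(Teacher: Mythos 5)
Your proposal is correct and follows essentially the same route as the paper, which simply invokes Corollary~2.7 of Mohammadian's paper (whose proof contains exactly the closure construction you reconstruct: an involutory block matrix $\begin{pmatrix} aA & bI \\ cI & -aA\end{pmatrix}$ with $a^2+bc=1$ and $abc\neq 0$, requiring $|F|\geq 4$) together with Theorem~\ref{Mohammadian} and Theorem~\ref{circular}. Your write-up just makes explicit the induction on $d$ and the verification of $M^2=I$ that the paper leaves to the citation.
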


\begin{Remark}\label{orth} If a bipartite graph $G$ admits an orthogonal matrix $B$ on a field $F$ or a unitary matrix $C$ on  the complex field $\mathbb{C}$ as a weighted bi-adjacency matrix, then \\ $$\begin{pmatrix}
 O & B\\
 B^{\top} & O
\end{pmatrix}\in\mathcal I_{F}(G)\not=\emptyset,\mbox{ or } \begin{pmatrix}
 O & C\\
C^{*} & O
\end{pmatrix}\in\mathcal I_{\mathbb{C}}(G)\not=\emptyset,$$
where $B^{\top}$ denotes the transpose of $B$ and the asterisk is used to denote the conjugate transpose.
\end{Remark}

 Mohammadian \cite{Mohammadian} gave some types of bipartite graphs which each admits an involutory weighted adjacency  matrix over real or complex fields: $d$-hypercube $Q_d$, folded $d$-hypercube $FQ_d$ (see also \cite{Alon}) for odd $d$, 2-blow-up of an even cycle $C_{2n}[2]$, complete bipartite graph $K_{n,n}$.  So by  Theorem \ref{circular} we obtain the minimum forcing number of their circular graphs as follows ($k\geq 2$).
\begin{itemize}
\item $f(Q_d\Box C_{2k})=2^d$.
  \item $f(FQ_d\Box C_{2k})=2^d$ for odd $d\geq 1$.
  \item $f(C_{2n}[2]\Box C_{2k})=4n$.
  \item $f(K_{n,n}\Box C_{2k})=2n$.
\end{itemize}

For the bipartite cocktail party graph $BCP(n)$ (obtained by removing  a perfect matching from $K_{n,n}$) for $n\not=1,3$, Mohammadian constructed an involutory weighted adjacency  matrix of $BCP(n)$ over a finite field $\mathbb F_p$, where $p$ is a prime number dividing $n-2$. Here $p$ may be equal to 2; in this case Theorem \ref{circular} does not apply. However, Bailey and Craigen \cite{Bailey} constructed a real orthogonal bi-adjacency  matrix of $BCP(n)$. In general, let $BCP(n,m)$ be a bipartite graph obtained from $K_{n,n}$ by removal of a  matching of size $m$, where $0\leq m\leq n$. Hence $BCP(n,n)=BCP(n)$ and $BCP(n,0)=K_{n,n}$.
\begin{thm}\cite[Theorem 5.8]{Bailey}\label{Bailey} Let $n\geq 1$ and $0 \le m \le n$. Then $BCP(n,m)$ has a real orthogonal bi-adjacency  matrix if and only if $(n,m) \notin \{(1,1), (2,1), (3,2), (3,3)\}.$
\end{thm}

By Theorems  \ref{Mohammadian}, \ref{circular} and  \ref{Bailey} and Remark \ref{orth}, we have the following result.
\begin{cor}Let $n\geq 1$ and $0 \le m \le n$. If $(n,m) \notin \{(1,1), (2,1), (3,2), (3,3)\} $, then $f(BCP(n,m)\Box K_2)=n$  and $f(BCP(n,m)\Box C_{2k})=2n$  for $k\geq 2$.
\end{cor}


\section{Minimum forcing number of prisms of graphs}
Theorem 2.10 in \cite{Mohammadian} states that for all positive integers $m$ and $n$, $f(K_{m,n}\Box K_2) = \min\{m,n\}$, which was proved by a  combinatorial method (some analysis  of  perfect matchings). However, for  $m\not=n$, Theorem \ref{Mohammadian} cannot be applied to directly obtain the result. In this section, we can generalize Theorem \ref{Mohammadian} as follows to handle prisms of non-balanced bipartite graphs.


Let $G=(X,Y)$ be a bipartite graph with $|X|=m, |Y|=n$~($m\le n$) and let $F$ be a field. We define
$\mathcal{R}_F (G)$ as the collection of pairs $(B,C)$, where  $B$, $C\in F^{m\times n}$  are weighted bi-adjacency matrices of $G$ satisfying $BC^{\top}=I_m.$

\begin{thm}\label{non-}
Let $G=(X,Y)$ be a bipartite graph with $|X|=m, |Y|=n$ and $m\le n$. If $\mathcal{R}_F (G)\ne \emptyset$, then
$$f(G\Box K_2)=m.$$
\end{thm}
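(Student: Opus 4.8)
The plan is to establish the two inequalities $f(G\Box K_2)\le m$ and $f(G\Box K_2)\ge m$ separately, paralleling the structure of Theorem~\ref{Mohammadian} but now exploiting the identity $BC^{\top}=I_m$ in place of involutivity.

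For the upper bound I would exhibit a single forcing set of size $m$. Write $G_1,G_2$ for the two copies of $G$ in $G\Box K_2$, with bipartitions $(X_1,Y_1)$ and $(X_2,Y_2)$, and let $S=\{x_1x_2:x\in X\}$ be the set of the $m$ rungs joining each vertex of $X_1$ to its copy in $X_2$. Once these $m$ edges are fixed, every vertex of $X_1\cup X_2$ is saturated, so each $y\in Y_1$ has all of its $G_1$-neighbours (which lie in $X_1$) already used and can only be matched along its rung to the corresponding vertex of $Y_2$; this forces the remaining matching uniquely, the induced subgraph on $Y_1\cup Y_2$ being exactly the set of $Y$-rungs. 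Hence $S$ extends to a unique perfect matching and $f(G\Box K_2)\le |S|=m$.

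For the lower bound I would apply the bi-adjacency half of Lemma~\ref{lem}. Note that $G\Box K_2$ is bipartite with the balanced bipartition $(X_1\cup Y_2,\;Y_1\cup X_2)$, each part of size $m+n$, so any weighted bi-adjacency matrix $\mathcal R$ of $G\Box K_2$ is square of order $m+n$ and $f(G\Box K_2)\ge \operatorname{corank}(\mathcal R)=(m+n)-\operatorname{rank}(\mathcal R)$. Taking $(B,C)\in\mathcal R_F(G)$, I would set
\[
\mathcal R=\begin{pmatrix} B & I_m\\ I_n & C^{\top}\end{pmatrix},
\]
with rows indexed by $X_1\cup Y_2$ and columns by $Y_1\cup X_2$: the block $B$ records the edges of $G_1$, the block $C^{\top}$ records the edges of $G_2$, and the two identity blocks record the rungs $X_1X_2$ and $Y_1Y_2$; one checks directly that $\mathcal R$ is a legitimate weighted bi-adjacency matrix of $G\Box K_2$. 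The key computation is that the first $m$ rows are linear combinations of the last $n$: since $B\cdot I_n=B$ and $B\cdot C^{\top}=I_m$ by hypothesis, we obtain the block identity $(\,B\mid I_m\,)=B\,(\,I_n\mid C^{\top}\,)$, whence $\operatorname{rank}(\mathcal R)\le n$ and $\operatorname{corank}(\mathcal R)\ge m$. Combining the two bounds yields $f(G\Box K_2)=m$.

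The main (and essentially only) obstacle is choosing the bipartition and block layout so that the hypothesis $BC^{\top}=I_m$ is precisely the relation collapsing the rank; once $\mathcal R$ is written down the argument is immediate. Unlike the involutory case, no square or inverse structure is required, which is exactly what lets the method reach non-balanced $G$ such as $K_{m,n}$ with $m\neq n$.
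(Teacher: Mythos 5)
Your proposal is correct and follows essentially the same route as the paper: the same forcing set of $X$-rungs for the upper bound, and the same weighted bi-adjacency matrix of $G\Box K_2$ (identity blocks for the rungs, $B$ and $C^{\top}$ for the two copies of $G$, with the roles of $B$ and $C$ merely swapped between copies) whose rank collapses to $n$ via $BC^{\top}=I_m$. The only cosmetic difference is that you express the rank bound by writing the first $m$ rows as combinations of the last $n$, whereas the paper writes the explicit factorization $R=\left(\begin{smallmatrix} I_n\\ B\end{smallmatrix}\right)\left(\begin{smallmatrix} I_n & C^{\top}\end{smallmatrix}\right)$; these are equivalent.
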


\begin{proof}
Let  $(X_i, Y_i), i=1,2$, be the  bipartitions of the $i$-th  copy $G_i$ of $G$ in $G\Box K_2$ corresponding to $(X, Y)$.

First, we show that $f(G\Box K_2)\le m$. Take the matching $M$ in $G\Box K_2$ between $X_1$ and $X_2$. Then $|M|=m$, and $ G\Box K_2-(X_1\cup X_2)$ is formed by the matching between $Y_1$ and $Y_2$, which implies that $M$ can be uniquely extended to a perfect matching in $G\Box K_2$. This implies that $f(G\Box K_2)\le m$.

Next, it remains to show  $f(G\Box K_2)\ge m$. The graph $G\Box K_2$ is bipartite. Since  $ \mathcal{R}_F (G)\not=\emptyset$, there is a pair of weighted bi-adjacency matrices  $B$ and $C$  in $\mathcal{R}_F (G)$ such that  $BC^{\top}= I_m$. We construct a weighted bi-adjacency matrix $R$ of $G\Box K_2$ as follows.
$$
  R:= \bordermatrix{%
	& Y_2       &X_1\cr
	Y_1    & I_n      & C^{\top } \cr
	X_2    & B        & I_m
    }
.$$

\noindent Since
$$
   R=\begin{pmatrix}
 I_n & C^\top \\
 B & I_m
\end{pmatrix}=\begin{pmatrix}
 I_n\\B
\end{pmatrix}\begin{pmatrix}
 I_n & C^\top
\end{pmatrix},$$
we have $$
\operatorname{rank}(R)\le \operatorname{rank}\begin{pmatrix}
 I_n & C^\top
\end{pmatrix}=n.
$$

\noindent Thus, by Lemma \ref{lem} we have
$$f(G\Box K_2)\ge \frac{|G\Box K_2|}{2} -\operatorname{rank}(R)\ge m+n-n=m .$$
\end{proof}

\begin{ex} Consider star $K_{1,n}$.
 Then  $B=\frac{1}{\sqrt{n}}(1,\dots ,1)$ is a   weighted bi-adjacency matrix of $K_{1,n}$, and $(B,B)\in \mathcal{R}_{\mathbb{R} } (K_{1,n})$. By Theorem \ref{non-} we have $f(K_{1,n}\Box K_2)=1$.
\end{ex}

\begin{cor}\label{non-c}
Let $G=(X,Y)$ be a bipartite graph with $|X|=m, |Y|=n$ and $m\le n$. If $\mathcal{R}_F (G)\ne \emptyset$, and $H$ is a subgraph obtained from $G$ by deleting $k$ vertices in $X $, then
$$f(H\Box K_2)=m-k.$$
\end{cor}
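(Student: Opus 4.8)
The plan is to deduce this corollary directly from Theorem \ref{non-} by exhibiting a pair of matrices in $\mathcal{R}_F(H)$ obtained by restricting a pair in $\mathcal{R}_F(G)$. Since $H$ arises from $G$ by deleting $k$ vertices of $X$, it is the induced bipartite subgraph on $(X',Y)$ with $X'\subseteq X$, $|X'|=m-k$ and $|Y|=n$; in particular $m-k\le m\le n$, so the size hypothesis of Theorem \ref{non-} is met, and it remains only to check that $\mathcal{R}_F(H)\ne\emptyset$.

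First I would take any $B,C\in\mathcal{R}_F(G)$, so that $B,C$ are $m\times n$ matrices with rows indexed by $X$ and $BC^{\top}=I_m$. Let $B'$ and $C'$ be the $(m-k)\times n$ matrices obtained by deleting from $B$ and $C$ the $k$ rows indexed by the removed vertices. For $x,x'\in X'$, the $(x,x')$-entry of $B'(C')^{\top}$ equals the inner product of row $x$ of $B$ with row $x'$ of $C$, which is exactly the $(x,x')$-entry of $BC^{\top}=I_m$. Hence $B'(C')^{\top}$ is the principal submatrix of $I_m$ on the index set $X'$, namely $I_{m-k}$.

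Next I would verify that $B'$ and $C'$ are genuine weighted bi-adjacency matrices of $H$. Because $H$ is the subgraph induced on $X'\cup Y$, for $x\in X'$ and $y\in Y$ we have $xy\in E(H)\iff xy\in E(G)$, so the nonzero pattern of $B'$ (resp. $C'$), being just the row-restriction of $B$ (resp. $C$), matches exactly the adjacency of $H$. Together with $B'(C')^{\top}=I_{m-k}$ this gives $(B',C')\in\mathcal{R}_F(H)$, so $\mathcal{R}_F(H)\ne\emptyset$. Applying Theorem \ref{non-} to $H$ then yields $f(H\Box K_2)=m-k$, as desired.

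The computations are all routine; the only point requiring care is the bookkeeping that identifies $B'(C')^{\top}$ with a \emph{principal} submatrix of the identity rather than an arbitrary submatrix, which relies on deleting the same row set from $B$ and from $C$, and on the observation that ``deleting vertices'' produces an induced subgraph, so that the bi-adjacency pattern is preserved under restriction. I do not expect any genuine obstacle beyond keeping these index sets aligned.
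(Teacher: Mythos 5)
Your proposal is correct and follows essentially the same route as the paper: delete the $k$ corresponding rows from both $B$ and $C$, note that the resulting product $B'(C')^{\top}$ is the principal submatrix $I_{m-k}$ of $I_m$, conclude $\mathcal{R}_F(H)\ne\emptyset$, and apply Theorem \ref{non-}. The only difference is that you spell out the two verifications (the principal-submatrix bookkeeping and the preservation of the bi-adjacency pattern under vertex deletion) that the paper leaves implicit.
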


\begin{proof}
 Since $\mathcal{R}_F (G)\ne \emptyset$, there exists a pair of  weighted bi-adjacency matrices $B_{m\times n}$ and $C_{m\times n}$ in $\mathcal{R}_F (G)$  such that  $BC^{\top}= I_m$.
Let $B_{(m-k)\times n}$ and $C_{(m-k)\times n}$ be the submatrices obtained from $B_{m\times n}$ and $C_{m\times n}$ by deleting the  rows indexed by these deleted $k$ vertices in $X$, respectively. Then  $B_{(m-k)\times n}C^{\top}_{(m-k)\times n}= I_{m-k}$, which implies $(B_{(m-k)\times n},C_{(m-k)\times n})\in \mathcal{R}_F (H)$. By Theorem 3.1, we have $f(H\Box K_2)=m-k.$
\end{proof}

We now  use Theorem \ref{non-} or  Corollary \ref{non-c} to deduce the above-mentioned result as follows.
\begin{pro}[Mohammadian \cite{Mohammadian}]
For any integers $1\le m\le n, f(K_{m,n}\Box K_2)=m.$
\end{pro}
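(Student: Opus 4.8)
The plan is to deduce the proposition immediately from Theorem~\ref{non-}, so that no fresh case analysis of perfect matchings is required; the sole task is to produce an admissible pair of weighted bi-adjacency matrices for $K_{m,n}$. Thus I would reduce everything to the single claim $\mathcal{R}_F(K_{m,n})\neq\emptyset$ for a convenient field $F$, after which Theorem~\ref{non-} gives $f(K_{m,n}\Box K_2)=m$ at once (here $m=\min\{m,n\}$ since $m\le n$).

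First I would unpack what $\mathcal{R}_F(K_{m,n})\neq\emptyset$ means. As $K_{m,n}$ is complete bipartite, every vertex of $X$ is adjacent to every vertex of $Y$, so a weighted bi-adjacency matrix must have \emph{all} entries nonzero. Hence I need two $m\times n$ matrices $B$ and $C$, each without a zero entry, obeying the biorthogonality relation $BC^{\top}=I_m$. Since $m\le n$, the rows live in an $n$-dimensional space and there is ample room to realize such a biorthogonal system; the natural device is a set of $m$ mutually orthonormal rows none of whose coordinates vanish.

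The concrete construction I would use is a truncated Fourier matrix over $\mathbb{C}$. Setting $\omega=e^{2\pi i/n}$, let $B$ be the $m\times n$ matrix with $B_{jk}=\tfrac{1}{\sqrt{n}}\,\omega^{jk}$, i.e.\ the first $m$ rows of the $n\times n$ DFT matrix. These rows are orthonormal, so $BB^{*}=I_m$; putting $C=\overline{B}$ gives $C^{\top}=B^{*}$ and therefore $BC^{\top}=BB^{*}=I_m$, while every entry of $B$ and of $C$ is a nonzero root of unity divided by $\sqrt{n}$. Thus $\{B,C\}\in\mathcal{R}_{\mathbb{C}}(K_{m,n})$ and Theorem~\ref{non-} finishes the proof. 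A variant closer to the list recalled in Section~2 is to begin from $\mathcal{I}_F(K_{n,n})\neq\emptyset$: writing the involution as $A=\begin{pmatrix}O&B\\ B^{-1}&O\end{pmatrix}$, both $B$ and $B^{-1}$ are all-nonzero $n\times n$ matrices, so $C:=(B^{-1})^{\top}$ yields $BC^{\top}=BB^{-1}=I_n$ and $\{B,C\}\in\mathcal{R}_F(K_{n,n})$; since $K_{m,n}$ is obtained from $K_{n,n}$ by deleting $n-m$ vertices of the larger part, Corollary~\ref{non-c} gives $f(K_{m,n}\Box K_2)=n-(n-m)=m$.

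The only genuine obstacle is the existence of an all-nonzero biorthogonal pair, that is $\mathcal{R}_F(K_{m,n})\neq\emptyset$ itself; the remaining steps are a one-line invocation of Theorem~\ref{non-} or Corollary~\ref{non-c}. I would stress that over $\mathbb{R}$ this requires care: an arbitrary all-nonzero $B$ of full row rank need not admit an all-nonzero dual, since the canonical right inverse $C^{\top}=B^{\top}(BB^{\top})^{-1}$ may acquire zero entries. Working over $\mathbb{C}$ with the Fourier matrix (equivalently, invoking the unitary case of the Remark) avoids this pitfall cleanly, which is why I would set the construction there.
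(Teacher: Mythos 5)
Your proposal is correct and essentially the same as the paper's: both arguments witness $\mathcal{R}_{\mathbb{C}}\neq\emptyset$ via the scaled Fourier matrix $\frac{1}{\sqrt{n}}(\omega^{jk})$, the paper taking the full $n\times n$ unitary matrix in $\mathcal{R}_{\mathbb{C}}(K_{n,n})$ and then applying Corollary~\ref{non-c}, while you truncate to the first $m$ rows and invoke Theorem~\ref{non-} directly (your second variant is precisely the paper's route). Since the proof of Corollary~\ref{non-c} is itself just row truncation, the two arguments coincide in substance.
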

\begin{proof}Consider the Fourier matrix $ F_n =(\xi^{ij})_{0\leq i,j\leq n-1}$,
where $\xi =e^{\frac{2\pi \text{i}}{n} } $ is a primitive $n$th root of unity. Let $B=\frac{1}{\sqrt{n}}F_n$. Since   $B$ is a unitary matrix, i.e.  $BB^*=I_n$, $\mathcal{R}_{\mathbb C} (K_{n,n})\ne \emptyset$.   By Corollary \ref{non-c}, we have $f(K_{m,n}\Box K_2)=n-(n-m)= m.$
\end{proof}

\begin{cor}
Let   $\{G_i = (X_i, Y_i)\}_{i=1}^t$ be a family of bipartite graphs with $|X_i| \le |Y_i|$,  $Y_i \cap Y_j = \emptyset$ for all $i \ne j$, and $X_i\cap (\cup_{j=1}^t Y_j) =\emptyset$ for all $i$. Let $G = \cup_{i=1}^t G_i$. For $F\in \{\mathbb R,\mathbb C\}$, if  $\mathcal{R}_F (G_i)\ne \emptyset$ for each $i$, then
\begin{equation}\label{coro2}f(G\Box K_2)=|X_1\cup \cdots \cup X_t|.\end{equation}
\end{cor}
\begin{proof} It is easy to see that  $G=\cup_{i=1}^t G_i$ is a bipartite graph with a bipartition $(\cup_{i=1}^t X_i, \cup_{i=1}^t Y_i)$. By Theorem \ref{non-}, it suffices to show that $\mathcal{R}_F (G)\ne \emptyset$. We proceed by induction on $t$, the number of graphs in the union. For $t=1$, the result is trivial by Theorem \ref{non-}. Now suppose  $t\geq 2$. Let $G_2'=\cup G_{i=2}^t$ with a bipartition $(X_2',Y_2')$, where $X_2'=\cup_{i=2}^t X_i$ and $Y_2'=\cup_{i=2}^t Y_i$. By the induction hypothesis we have $\mathcal{R}_F (G_2')\ne \emptyset$. Take $(B_1,C_1)\in \mathcal{R}_F  (G_1)$, and $(B_2,C_2)\in \mathcal{R}_F  (G_2')$. Then  $B_1C_1^{\top}=I_{|X_1|}$ and $B_2C_2^{\top}=I_{|X_2'|}$.

If $X_1\cap X_2'= \emptyset$, construct two weighted bi-adjacency matrices of $G$:
$$
  B:= \bordermatrix{%
	& Y_1       &Y_2'\cr
	X_1    & B_1  & O\cr
	X_2'    & O        &  B_2
    },
  C:=  \bordermatrix{%
	& Y_1       &Y_2'\cr
	X_1    & C_1  & O\cr
	X_2'    & O       &  C_2
    }.
$$
Then  $$BC^\top=\begin{pmatrix}
	 B_1  & O\\
	 O    &  B_2
    \end{pmatrix}\begin{pmatrix}
	 C_1^{\top}  & O\\
	 O    &  C_2^{\top}
    \end{pmatrix}=\begin{pmatrix}
	 B_1C_1^{\top}  & O\\
	 O    &  B_2C_2^{\top}
    \end{pmatrix}=I_{|X_1|+|X_2'|}.$$
Hence $(B,C)\in \mathcal{R}_F(G)$, so $\mathcal{R}_F(G)\not=\emptyset$.

Now suppose $X_0:= X_1\cap X_2'\not=\emptyset$. For $i=1,2$, let $B_i(X)$ denote the submatrix of $B_i$ formed by the rows corresponding to vertices in $X$. Define a weighted bi-adjacency matrix $B$ of $G$ by
$$
   B:=\bordermatrix{%
	& Y_1        &Y_2'\cr
  X_1\setminus X_0 & B_1(X_1\setminus X_0)  & O\cr
  X_0 & \frac{1}{\sqrt{2}}B_1(X_0)      &\frac{1}{\sqrt{2}}B_2(X_0)  \cr
X_2'\setminus X_0& O          &  B_2(X_2'\setminus X_0)
    }.
$$
Similarly, define another weighted bi-adjacency matrix $C$ of $G$ by
$$
   C:=\bordermatrix{%
	& Y_1        &Y_2'\cr
  X_1\setminus X_0 & C_1(X_1\setminus X_0)  & O\cr
  X_0 & \frac{1}{\sqrt{2}}C_1(X_0)      &\frac{1}{\sqrt{2}}C_2(X_0)  \cr
X_2'\setminus X_0& O          &  C_2(X_2'\setminus X_0)
    }.
$$
A direct computation  using $B_1C_1^{\top}=I_{|X_1|}$ and $B_2C_2^{\top}=I_{|X_2'|}$ yields   $BC^{\top}=I_{|X_1\cup \cdots \cup X_t|}$. Thus $(B,C)\in \mathcal{R}_F (G)$, so $\mathcal{R}_F (G)\ne \emptyset$, which implies Eq. (\ref{coro2}) by Theorem \ref{non-}.
\end{proof}

\section{Further examples}

In this section, we present additional examples illustrating the application of  Theorems \ref{Mohammadian} and \ref{circular}.

\begin{ex}

The signed graph $S_{14}$  \cite{Mckee} is  a 4-regular bipartite graph with 14 vertices $x_0, x_1,\dots, x_6$, $ y_0, y_1, \dots, y_6$, where each $x_i$ is connected to  $y_i, y_{i+1}$ and $y_{i+3}$ by positive edges, and to $y_{i-1}$ by a negative edge (the subscripts modulo 7).  Assigning weight  $\frac{1}{2}$ to each positive edge and $-\frac{1}{2}$ to each negative edge, we obtain  a weighted bi-adjacency matrix $B$ of $S_{14}$  as shown in Figure 1, where the rows are indexed by $x_0, x_6, x_4, x_1,x_3,x_5$ and the columns by  $y_0, y_1, y_3, y_6,y_4,y_5,y_2$. The matrix $B$ is an orthogonal matrix of order 7. Hence,   $\mathcal{I}_{\mathbb R} (S_{14})\ne \emptyset$. By Theorems \ref{Mohammadian} and \ref{circular}, we have $f(S_{14}\Box K_2)=7$ and $f(S_{14}\Box C_{2k})=14$ for  $k\ge2.$

\begin{figure}[thbp!]
    \centering
    \begin{minipage}[l]{0.49\linewidth}
        \includegraphics[width=0.8\linewidth]{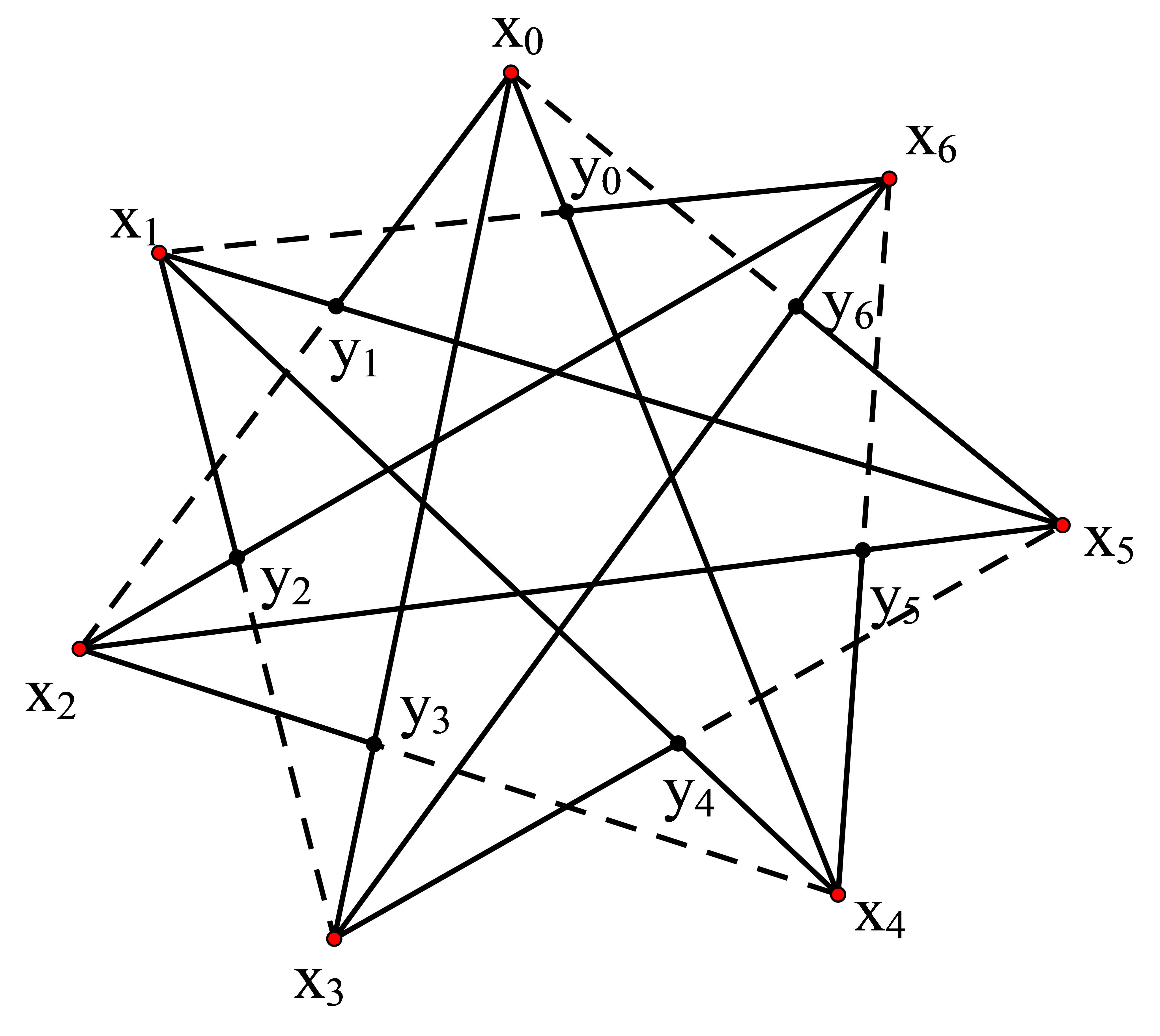}
        \end{minipage}
    \begin{minipage}[r]{0.49\linewidth}
        $\begin{pmatrix}
 \frac{1}{2} 	& \frac{1}{2} & \frac{1}{2} & -\frac{1}{2} & 0 & 0 & 0\\
 \frac{1}{2} 	& 0 & 0 & \frac{1}{2} & 0 & \frac{1}{2} & -\frac{1}{2}\\
 \frac{1}{2} 	& 0 & -\frac{1}{2} & 0 & \frac{1}{2} & 0 & \frac{1}{2}\\
 -\frac{1}{2} & \frac{1}{2} & 0 & 0 & \frac{1}{2} & \frac{1}{2} & 0\\
 0 & 0		   & \frac{1}{2} & \frac{1}{2} & \frac{1}{2} & -\frac{1}{2} & 0\\
 0 & \frac{1}{2} & 0 & \frac{1}{2} & -\frac{1}{2} & 0 & \frac{1}{2}\\
 0 & -\frac{1}{2} & \frac{1}{2} & 0 & 0 & \frac{1}{2} & \frac{1}{2}
       \end{pmatrix}$
   \end{minipage}
\caption{\label{essential} Signed graph  $S_{14}$ with a weighted bi-adjacency matrix (see also \cite{Mckee}).}
 \end{figure}
\end{ex}

The direct product of two graphs $G$ and $H$, denoted by $G\times H$, is the graph with vertex set $V(G)\times V(H)$, and two vertices $(g_1,h_1)$ and $(g_2,h_2)$ are adjacent if and only if both $g_1g_2 \in E(G)$ and $h_1h_2 \in E(H)$. In particular, $G\times K_2$ is called the bipartite double of $G$, denoted by $bd(G)$.

\begin{ex}

Consider  a family of  5-regular graphs illustrated in Figure \ref{essential2}. The left graph $G_1$ has $4t+2$ vertices and the right graph $G_2$ has $4t+4$ vertices, $t\geq 2$.  
Stani\'c \cite{Stani} pointed out that the  bipartite doubles $bd(G_i)$ of $G_i$ are  bipartite connected graphs and showed that $bd(G_1)$ and $bd(G_2)$ admit  weighted bi-adjacency matrices  $B_1:=E(4t+2,5)$ and  $B_2:=F(4t+4,5)$ as shown in Figure \ref{essential3}, where  ``+" and ``$-$"  denote ``1" and ``$-1$", respectively, and the empty positions indicate zero entries (see Lemma 3.10 in \cite{Stani}). Since these matrices are row orthogonal,
$$\frac{ 1}{\sqrt{5}}\begin{pmatrix}
 O & B_1\\
 B_1^\top & O
\end{pmatrix}\in \mathcal{I}_{\mathbb R} (bd(G_1)), \quad
\frac{ 1}{\sqrt{5}}\begin{pmatrix}
 O & B_2\\
 B_2^\top & O
\end{pmatrix}\in \mathcal{I}_{\mathbb R} (bd(G_2)).$$
Hence, we have $f(bd(G_1)\Box K_2)=4t+2$ and $f(bd(G_2)\Box K_2)=4t+4$ by Theorem \ref{Mohammadian}, and $f(bd(G_1)\Box C_{2k})=8t+4$ and $f(bd(G_2)\Box C_{2k})=8t+8$ for $k\geq 2$ by Theorem \ref{circular}.

\begin{figure}[!htbp]
\begin{center}
\includegraphics[totalheight=4cm]{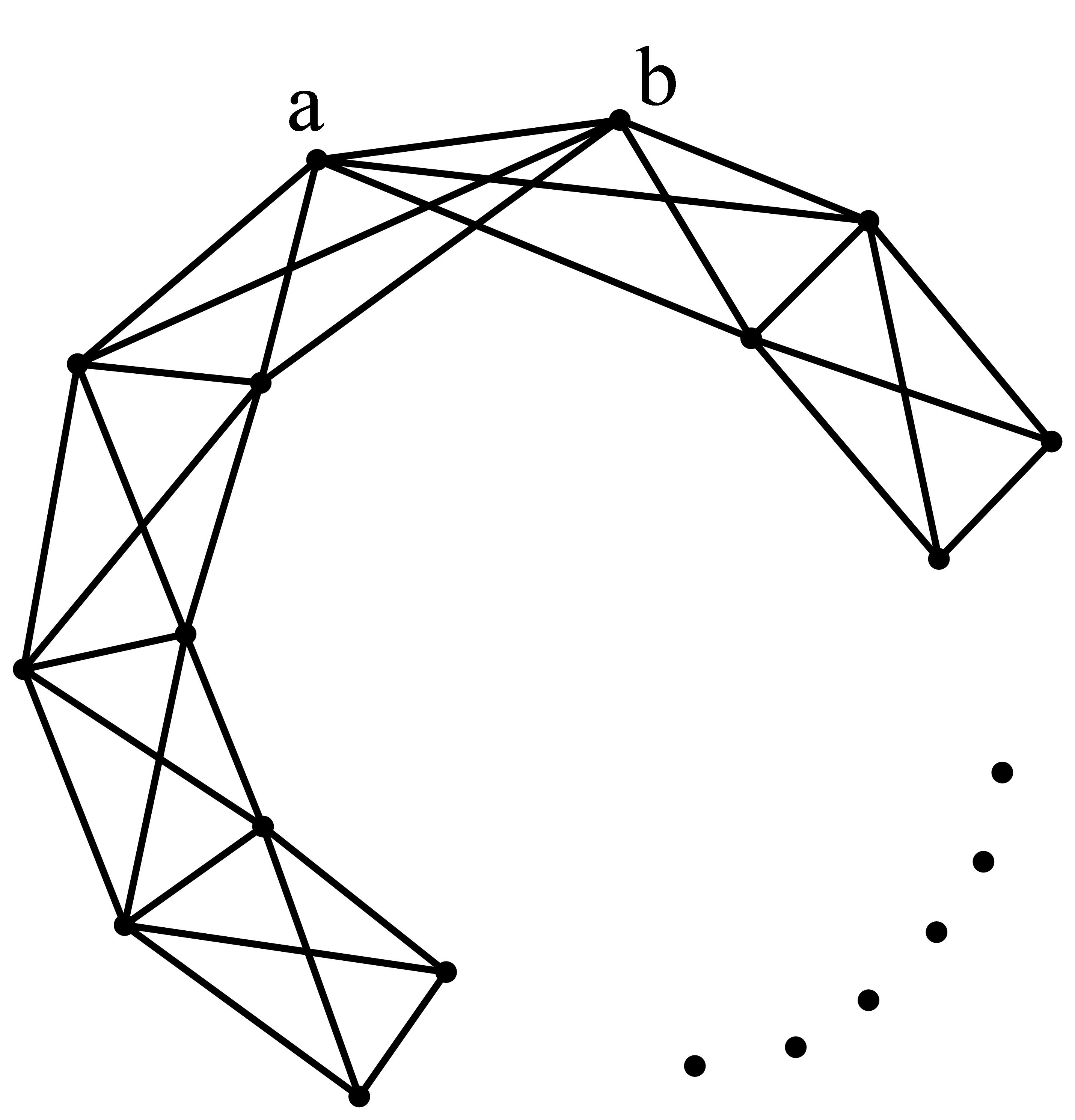}~~~~~~~
\includegraphics[totalheight=4cm]{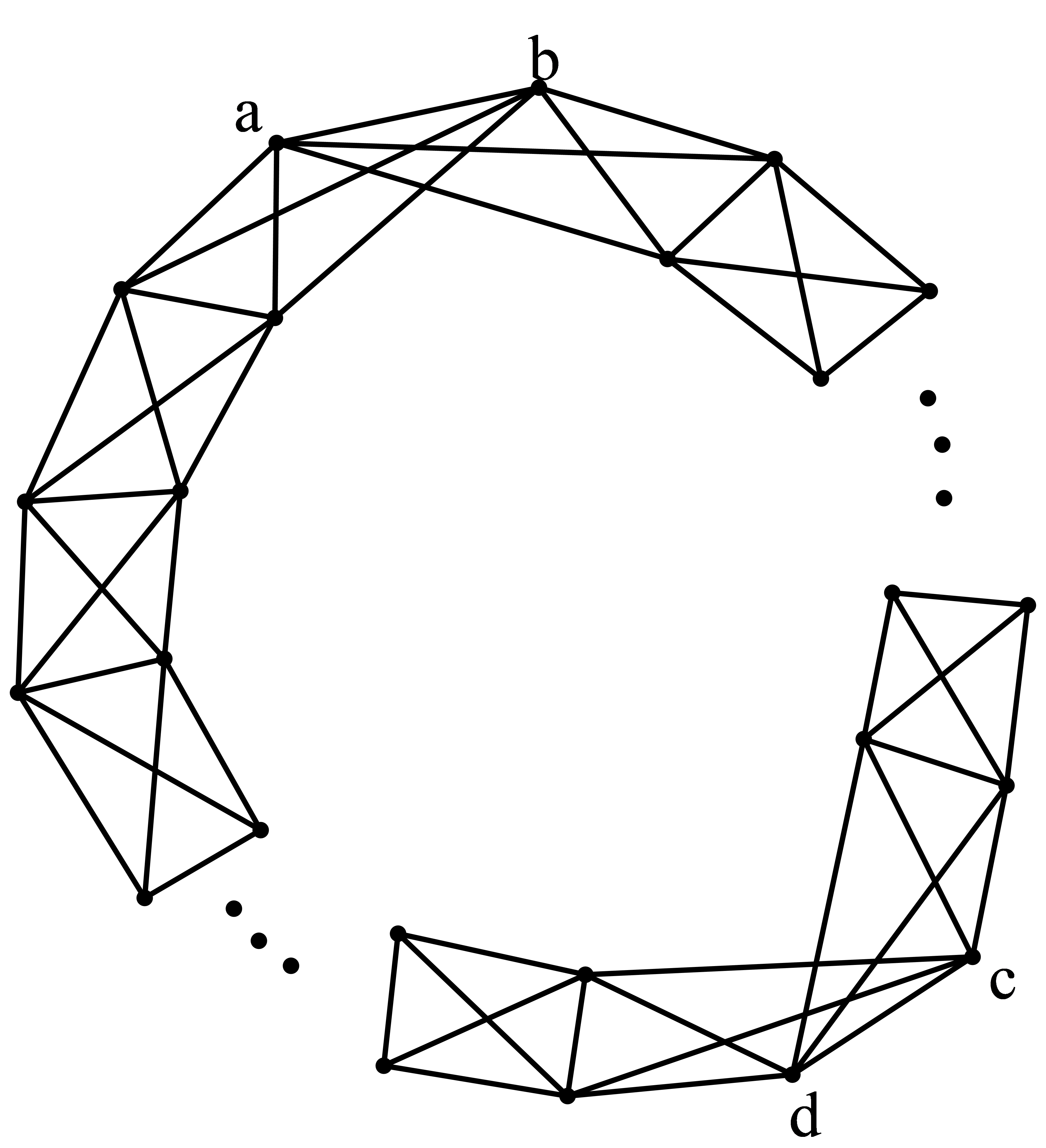}
\caption{\label{essential2} Illustration for graphs $G_1$ (left) and $G_2$ (right).}
\end{center}
\end{figure}

\begin{figure}[!htbp]
\begin{center}
\tiny{
\addtocounter{MaxMatrixCols}{10}
\setlength{\arraycolsep}{1.7pt}
\renewcommand\arraystretch{0.9}
$\begin{pmatrix}
 + & + & + & + & + &  &  &  &  &  &  &  &  &  &  &  &  &  & \\
 + & + & - & - &  & + &  &  &  &  &  &  &  &  &  &  &  &  & \\
 + & - &  &  &  &  & + & + & + &  &  &  &  &  &  &  &  &  & \\
 + & - &  &  &  &  & - & - &  & + &  &  &  &  &  &  &  &  & \\
 + &  &  &  & - & - &  &  & + & + &  &  &  &  &  &  &  &  & \\
   & + &  &  & - & - &  &  & + & + &  &  &  &  &  &  &  &  & \\
   &  & + & - &  &  &  &  &  &  &  &  &  &  &  &  &  &  & \\
   &  & + & - &  &  &  &  &  &  &  &  &  &  &  &  &  &  & \\
   &  & + &  & - & + &  &  &  &  &  &  &  &  &  &  &  &  & \\
   &  &  & + & - & + &  &  &  &  &  &  &  &  &  &  &  &  & \\
   &  &  &  &  &  &  &  &  &  & \ddots  &  &  &  &  &  &  &  & \\
   &  &  &  &  &  &  &  &  &  &  &  &  &  &  & + & + & + & \\
   &  &  &  &  &  &  &  &  &  &  &  &  &  &  & - & - &  & +\\
   &  &  &  &  &  &  &  &  &  &  &  &  &  &  &  &  & - & -\\
   &  &  &  &  &  &  &  &  &  &  &  &  &  &  &  &  & + & +\\
   &  &  &  &  &  &  &  &  &  &  & + & - &  &  & + &  & - & +\\
   &  &  &  &  &  &  &  &  &  &  & + & - &  &  &  & - & + & -\\
   &  &  &  &  &  &  &  &  &  &  & + &  & - & + & - & + &  & \\
   &  &  &  &  &  &  &  &  &  &  &  & + & - & + & + & - &  &
\end{pmatrix}$
\addtocounter{MaxMatrixCols}{10}
\setlength{\arraycolsep}{1.7pt}
\renewcommand\arraystretch{0.9}
$\begin{pmatrix}
 + & + & + & + & + &  &  &  &  &  &  &  &  &  &  &  &  &  &  &  & \\
 + & + & - & - &  & + &  &  &  &  &  &  &  &  &  &  &  &  &  &  & \\
 + & - &  &  &  &  & + & + & + &  &  &  &  &  &  &  &  &  &  &  & \\
 + & - &  &  &  &  & - & - &  & + &  &  &  &  &  &  &  &  &  &  & \\
 + &  &  &  & - & - &  &  & + & + &  &  &  &  &  &  &  &  &  &  & \\
  & + &  &  & - & - &  &  & + & + &  &  &  &  &  &  &  &  &  &  & \\
  &  & + & - &  &  &  &  &  &  &  &  &  &  &  &  &  &  &  &  & \\
  &  & + & - &  &  &  &  &  &  &  &  &  &  &  &  &  &  &  &  & \\
  &  & + &  & - & + &  &  &  &  &  &  &  &  &  &  &  &  &  &  & \\
  &  &  & + & - & + &  &  &  &  &  &  &  &  &  &  &  &  &  &  & \\
  &  &  &  &  &  &  &  &  &  & \ddots  &  &  &  &  &  &  &  &  &  & \\
  &  &  &  &  &  &  &  &  &  &  &  &  &  &  & + & + & + &  &  & \\
  &  &  &  &  &  &  &  &  &  &  &  &  &  &  & - & - &  & + &  & \\
  &  &  &  &  &  &  &  &  &  &  &  &  &  &  &  &  & - & - &  & \\
  &  &  &  &  &  &  &  &  &  &  &  &  &  &  &  &  & + & + &  & \\
  &  &  &  &  &  &  &  &  &  &  & + & - &  &  & + & - &  &  & + & \\
  &  &  &  &  &  &  &  &  &  &  & + & - &  &  & - & + &  &  &  & +\\
  &  &  &  &  &  &  &  &  &  &  & + &  & - & + &  &  &  &  & - & -\\
  &  &  &  &  &  &  &  &  &  &  &  & + & - & + &  &  &  &  & + & +\\
  &  &  &  &  &  &  &  &  &  &  &  &  &  &  & + &  & - & + & - & +\\
  &  &  &  &  &  &  &  &  &  &  &  &  &  &  &  & + & - & + & + & -
\end{pmatrix}$
}

\caption{\label{essential3}$(0,1,-1)$-matrices $E(4t+2,5)$ (left) and $F(4t+4,5)$  (right), $t\geq 2$ (adapted from \cite{Stani}).}
\end{center}
\end{figure}
\end{ex}

\begin{ex}
All previous examples mainly concern regular graphs.   However, for a bipartite graph $G$, $\mathcal{I}_F (G)\ne \emptyset$ does not necessarily guarantee that $G$ is  regular, as Theorem 2.4 gives an example  in $BCP(n,m)$ with $1\leq m<n$. The graph $G'$ in  Figure \ref{essential4},  obtained by removing 8 edges from $K_{7,7}$, is an additional example, which admits a weighted bi-adjacency matrix of order 7, as shown in  Figure \ref{essential4} (right) (see Example 3.19 in \cite{Brennan}).  This matrix is square and row orthogonal, and its standardization yields an orthogonal matrix, which implies $\mathcal{I}_{\mathbb R} (G')\not=\emptyset$. Hence, by Theorems \ref{Mohammadian} and \ref{circular}, we have $f(G'\Box K_2)=7$ and   $f(G'\Box C_{2k})=14$ for $k\ge2.$

\begin{figure}[!htbp]
\begin{center}
    \centering
    \begin{minipage}[l]{0.49\linewidth}
        \includegraphics[width=0.8\linewidth]{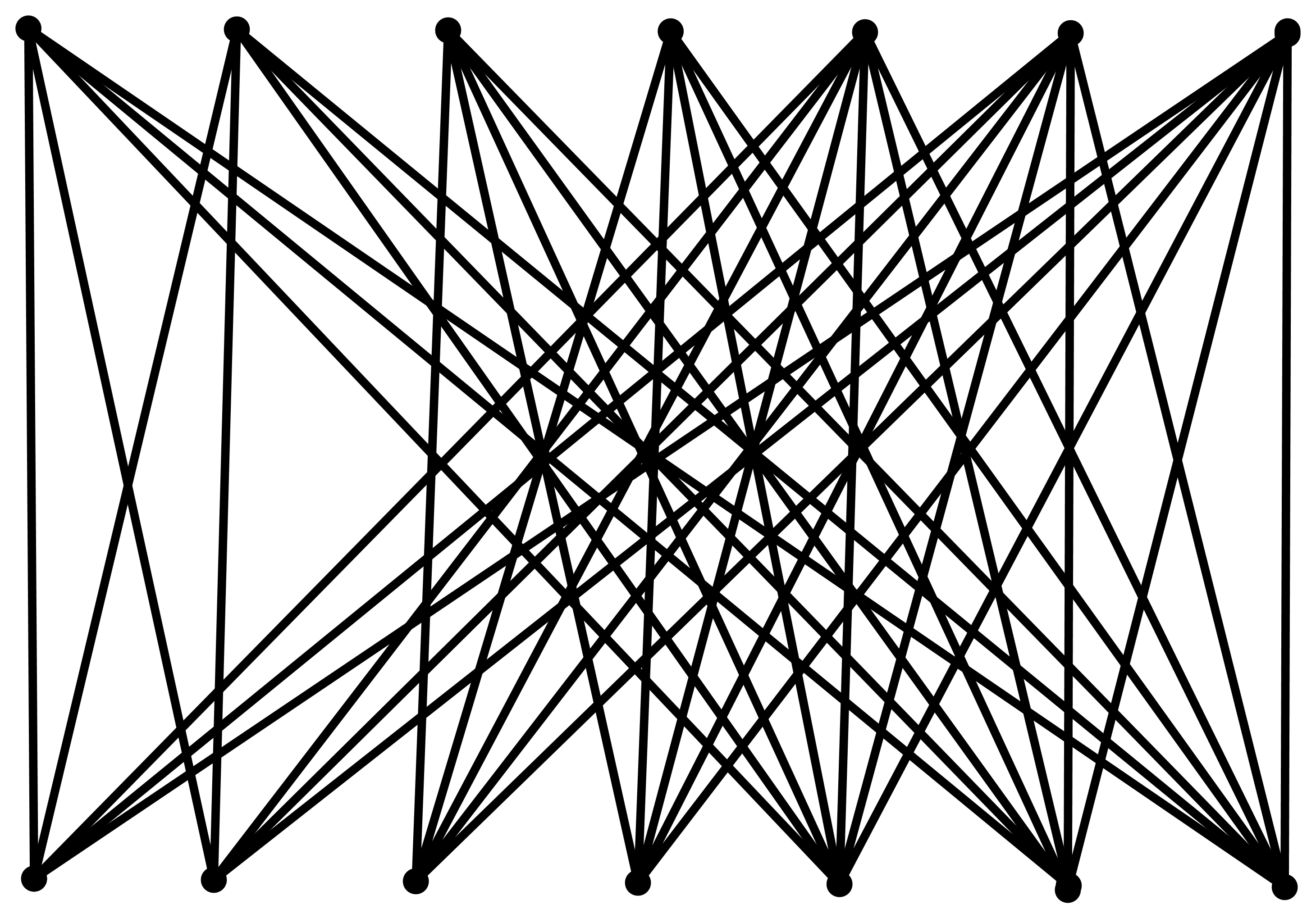}
        \end{minipage}
    \begin{minipage}[r]{0.49\linewidth}
        $\begin{pmatrix}
 -9 & 9 & 0 & 0 & 3\sqrt{2}  & -6\sqrt{2}  & 6\sqrt{2} \\
 9 & -9 & 0 & 0 & 3\sqrt{2}  & -6\sqrt{2}  & 6\sqrt{2} \\
 0 & 0 & -9 & 9 & -6\sqrt{2}  & 3\sqrt{2}  & 6\sqrt{2} \\
 0 & 0 & 9 & -9 & -6\sqrt{2}  & 3\sqrt{2}  & 6\sqrt{2} \\
 3\sqrt{2} & 3\sqrt{2} & -6\sqrt{2} & -6\sqrt{2} & 8 & 8 & 4 \\
 -6\sqrt{2} & -6\sqrt{2} & 3\sqrt{2} & 3\sqrt{2} & 8 & 8 & 4 \\
 6\sqrt{2} & 6\sqrt{2} & 6\sqrt{2} & 6\sqrt{2} & 4 & 4 & 2 \\
\end{pmatrix}$
   \end{minipage}
\caption{\label{essential4} A bipartite graph $G'$ with  a weighted bi-adjacency matrix (see \cite{Brennan}).}
\end{center}
\end{figure}

\end{ex}


\section*{Acknowledgments}

The authors are grateful to two anonymous referees for their  valuable comments and suggestions to improve this manuscript.


\end{document}